\documentclass[12pt]{amsart}
\usepackage{adjustbox}   
\usepackage{amsmath, amssymb,mathtools,amsthm,amscd,
multirow, tikz-cd,blindtext, graphicx, float, tabularx, lipsum}
\usepackage[normalem]{ulem}
\usepackage[all]{xy}
\usepackage[english]{babel}
\definecolor{dodgerblue}{rgb}{0.12, 0.56, 1.0}
\usepackage[raiselinks=false,colorlinks=true,citecolor=blue,urlcolor=dodger blue,
linkcolor=blue,bookmarksopen=true,pdftex]{hyperref}

\tikzcdset{scale cd/.style={every label/.append style={scale=#1},cells={nodes={scale=#1}}}}
\pagestyle{headings}
\textwidth = 6.5 in \textheight = 8.5 in \oddsidemargin = 0.0 in
\evensidemargin = 0.0 in \topmargin = 0.0 in \headheight = 0.0 in
\headsep = 0.5 in
\parskip = 0.1in
\parindent = 0.0in

\usepackage[mathscr]{eucal}
\newtheorem{theorem}{Theorem}[section]
\newtheorem{proposition}[theorem]{Proposition}
\newtheorem{lemma}[theorem]{Lemma}
\newtheorem{remark}[theorem]{Remark}

\newtheorem{corollary}[theorem]{Corollary}

\begin{document}

\newcommand{\ind}{{\rm ind}}
\newcommand{\lra}[1]{ \langle #1 \rangle}
\def\inn{\mathrm{Inn}}
\def\myhom{\mathrm{Hom}}
\def\gcd{\mathrm{gcd}}
\newcommand{\tmod}[1]{\; (#1)}
\newcommand{\myendo}[2]{{\rm End}(#1,#2)}
\newcommand\citeY[1]{\citeauthor{#1} (\citeyear{#1})}
\newcommand\citeN[1]{[\citenumber{#1}]}

\newcommand{\af}{\alpha}
\newcommand{\et}{\eta}
\newcommand{\ga}{\gamma}
\newcommand{\Ga}{\Gamma}
\newcommand{\ta}{\tau}
\newcommand{\ph}{\varphi}
\newcommand{\bt}{\beta}
\newcommand{\lb}{\lambda}
\newcommand{\Lb}{\Lambda}
\newcommand{\wh}{\widehat}
\newcommand{\sg}{\sigma}
\newcommand{\Sg}{\Sigma}
\newcommand{\om}{\omega}
\newcommand{\Om}{\Omega}

\newcommand{\bb}{\mathbb}
\newcommand{\mathbbN}{\mathbb N}
\newcommand{\mathbbZ}{\mathbb{Z}}
\newcommand{\mathbbR}{\mathbb R}
\newcommand{\mathbbC}{\mathbb C}

\newcommand{\cH}{\mathcal H}
\newcommand{\cF}{\mathcal F}
\newcommand{\cN}{\mathcal N}
\newcommand{\cR}{\mathcal R}
\newcommand{\cc}{\mathcal C}
\newcommand{\cP}{\mathcal P}
\newcommand{\cW}{\mathcal W}

\newcommand{\T}{\mathrm{T}}
\newcommand{\im}{\mathrm{Im}}
\newcommand{\SO}{\mathrm{SO}}
\renewcommand{\O}{\mathrm{O}}
\newcommand{\Spin}{\mathrm{Spin}}
\newcommand{\E}{\mathrm E}
\newcommand{\tor}{\mathrm{Tor}}
\newcommand{\rank}{\mathrm{rank}}
\newcommand{\Mod}[1]{\ (\mathrm{mod}\ #1)}
\newcommand{\GL}{\mathrm{GL}}
\newcommand{\SU}{\mathrm{SU}}
\newcommand{\dprime}{{\prime\prime}}
\newcommand{\p}{\prime}
\newcommand{\rH}{\mathrm H}

\newcommand{\bea} {\begin{eqnarray*}}
\newcommand{\beq} {\begin{equation}}
\newcommand{\bey} {\begin{eqnarray}}
\newcommand{\eea} {\end{eqnarray*}}
\newcommand{\eeq} {\end{equation}}
\newcommand{\eey} {\end{eqnarray}}
\newcommand{\ovl}{\overline}
\newcommand{\I}{{\imath}}

\newcolumntype{P}[1]{>{\centering\arraybackslash}p{#1}}

\title{The $\cP$-triviality of Stunted Projective Spaces}
\author[S. Podder]{Sudeep Podder}
\address{Indian Statistical Institute, 8th Mile, Mysore Road, RVCE Post, Bangalore 560059, India}
\email{sudeeppodder1993@gmail.com, sudeep\_pd@isibang.ac.in}
\author[G. Sau]{Gobinda Sau}
\address{Indian Statistical Institute, 8th Mile, Mysore Road, RVCE Post, Bangalore 560059, India}
\email{gobindasauanalytica@gmail.com, gobindasau@isibang.ac.in}
\keywords{$\cP$-trivial, Stunted projective space, Pontrjagin classes.}
\subjclass[2020]{Primary: 57R20, Secondary: 55R22}
\begin{abstract}
In this article, we introduce the notion of $\cP$-triviality of topological manifolds and give a complete description of the $\cP$-triviality of stunted real and complex projective spaces.  
\end{abstract}

\maketitle
\section{Introduction}\label{intro}
The vanishing of characteristic classes of vector bundles over a fixed space has been recently studied by various authors; see, for instance, \cite{Tanaka2010}, \cite{NT2014}, \cite{BKN2024}.  It is well known due to Atiyah and Hirzebruch
(see \cite[Theorem 2]{AH1961}), that the 9-fold suspension of any finite CW complex is $\cW$-trivial. A space $X$ is called $\cW$-trivial if the total Stiefel-Whitney class $w(\alpha)$ is 1 for every vector bundle $\alpha$ over $X$. Such a problem of finding trivialities of characteristic classes was also introduced by Tanaka in \cite{Tanaka2010} due to its relation with certain Borsuk-Ulam type theorems for every vector bundle over a fixed base space. He proved that the $8$-fold suspension $\Sigma^8X$ of a space $X$ is $\cW$-trivial if either $X$ is $\cW$-trivial.
or the cup product in $\widetilde{H}^{*}(X;\mathbb{Z}_2)$ is trivial. Note that if $X$ is a suspension, the cup product in $\widetilde{H}^{*}(X;\mathbb{Z}_2)$ is trivial and the Atiyah-Hirzebruch theorem will be obtained by applying Tanaka's result to $X$. 
 
Let $X$ be CW complex. We define $X$ to be $\cP$-trivial if all the \textit{Pontrjagin classes} for any vector bundle $\xi$ over $X$ vanish, that is, the \textit{total Pontrjagin class} $p(\xi)=1$ for every vector bundle $\xi$ over $X$ (see \S2). In this article, we study the notion of $\cP$-triviality and determine whether some of the familiar spaces and their suspensions are $\cP$-trivial.\\

The real projective space $\bb{RP}^n$ is the $n$-th skeleton of $\bb{RP}^m$ for $m\ge n$. The quotient space $\bb{RP}^m/\bb{RP}^n$ is called a stunted real projective space, and let us denote it by $X_{m, n}$. We write $X_m$ for $\bb{RP}^m$ and may call it a stunted space by identifying it with $X_{m, 0}$. James introduced these spaces in \cite{James1959} to compute sections of the Stiefel Manifolds. Adams, in \cite{Adams1962}, proving specific \textit{co-reducibility} results of these spaces, settled the classical vector field problem on spheres. It is interesting to compute the behaviour of characteristic classes of vector bundles over the stunted projective spaces. 

The trivialities of Chern and Stiefel-Whitney classes of vector bundles over the iterated suspension of the real projective spaces were extensively discussed in \cite{NT2014}, \cite{NT2014(1)}, \cite{BKN2024}. For a comprehensive understanding of certain characteristic classes of vector bundles over the iterated suspension of spaces, we refer to Tanaka \cite{Tanaka2010}. We denote the $k$-th iterated suspension of $X_{m, n}$ by $X^k_{m, n}$. We have the following triviality result of the Pontrjagin classes of vector bundles over $X^k_{m, n}$. Let us define, as in \cite{Adams1962}, the function $\varphi$ such that $\varphi(m, n)$ to be the number of integers $n<s\leq m$ such that $s\equiv 0, 1, 2$ or $4 \pmod 8$.
\begin{theorem}\label{main1}
    $X_{m,n}^k$ is not $\cP$-trivial if and only if one of the following holds.\\
    (a) $k=0=n$ and $m\ge 4$,\\
    (b) $n=0$ and $k+m\equiv 0\pmod 4$,\\
    (c) Let $k=0$ and $g = \varphi(n, 0)$. Either $n\equiv 3\pmod 4$ or $m\ge 2^{g+1}$.\\
    (d) $k>0$, $k$ is odd and $(k+m)\equiv 0\pmod 4$, or $k$ is even and $k+n+1\equiv 0\pmod 4.$
\end{theorem}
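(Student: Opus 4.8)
The plan is to recast $\cP$-triviality as a statement in $K$-theory and then run a case analysis organised by where the Pontrjagin classes can possibly be nonzero. Because Pontrjagin classes are stable, $p_i$ factors through $\widetilde{KO}(X^k_{m,n})$, so $X^k_{m,n}$ fails to be $\cP$-trivial exactly when some $p_i\colon\widetilde{KO}(X^k_{m,n})\to H^{4i}(X^k_{m,n};\mathbbZ)$ is nonzero. Writing $p_i(\xi)=(-1)^i c_{2i}(c\xi)$ with $c\colon\widetilde{KO}\to\widetilde{KU}$ complexification, I would first do the purely cohomological bookkeeping: $p_i$ lands in $H^{4i}(X^k_{m,n};\mathbbZ)\cong\widetilde H^{\,4i-k}(X_{m,n};\mathbbZ)$, and from the cofibre sequence $\bb{RP}^n\to\bb{RP}^m\to X_{m,n}$ one reads off that the reduced integral cohomology of $X_{m,n}$ consists of a $\mathbbZ/2$ in each even degree $n<j\le m$, a free summand at the bottom cell $j=n+1$ when $n$ is odd, and a free summand at the top cell $j=m$ when $m$ is odd. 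The parity of $k$ now selects the admissible targets: for $k$ odd the shifted degree $4i-k$ is odd, so the only possible target is the free top cell, forcing $k+m\equiv0\pmod4$; for $k$ even it is even, and among the free targets only the bottom cell survives the constraint $4i-k=n+1$, forcing $k+n+1\equiv0\pmod4$. This already produces the congruences in (b) and (d).

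For the unsuspended case $k=0$ I would argue with line bundles, which exist since $H^1(X_{m,n};\mathbbZ/2)\ne0$. Over $\bb{RP}^m$ the complexified tautological bundle has $c(\ga_{\mathbbC})=1+\alpha$ with $\alpha$ generating $H^2(\bb{RP}^m;\mathbbZ)\cong\mathbbZ/2$ and $2\alpha=0$, so $c((t\ga)_{\mathbbC})=(1+\alpha)^t$ and $p_i(t\ga)=\binom{t}{2i}\,\alpha^{2i}$ in $H^{4i}(\bb{RP}^m;\mathbbZ)$. Choosing $t=2i$ gives $p_i(2i\,\ga)=\alpha^{2i}\ne0$ whenever $4i\le m$, i.e. $m\ge4$, which is (a). For the stunted space I would use naturality along $q\colon\bb{RP}^m\to X_{m,n}$, which is an isomorphism on $H^{4i}$ in the relevant range, together with the fact that the bundles descending from $X_{m,n}$ are exactly those with stable class in $\im\big(\widetilde{KO}(X_{m,n})\to\widetilde{KO}(\bb{RP}^m)\big)=2^{g}\,\widetilde{KO}(\bb{RP}^m)$, where $g=\varphi(n,0)$ and $|\widetilde{KO}(\bb{RP}^m)|=2^{\varphi(m,0)}$. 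By Lucas' theorem $\binom{2^{g}t}{2i}$ is odd only when $2^{g}\mid 2i$, so the smallest usable degree is $4i=2^{g+1}$; hence a nonzero $\mathbbZ/2$-valued class appears precisely when $m\ge2^{g+1}$, while for $n\equiv3\pmod4$ the bottom cell lies in degree $n+1\equiv0\pmod4$ and carries a free class that supports a nonzero $p_{(n+1)/4}$. Together these give (c).

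For the suspended case $k\ge1$ there are no nontrivial line bundles, so I would work directly with $\widetilde{KO}^{-k}(X_{m,n})$ and $\widetilde{KU}^{-k}(X_{m,n})$, using the computations for stunted projective spaces (Adams, governed by $\varphi$) and the Atiyah–Hirzebruch spectral sequence. On a suspension all cup products vanish, so Newton's identities collapse to $c_{2i}(c\xi)=-(2i-1)!\,\mathrm{ph}_i(\xi)$ modulo torsion; on a free summand this is detected by the sphere computation, since the generator of $\widetilde{KO}(S^{4j})\cong\mathbbZ$ has nonzero $p_j$ and a generator of $\widetilde{KO}(X^k_{m,n})$ maps onto it under the relevant cell (co)collapse. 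This pins down exactly the two free-cell cases of (d). The genuinely different behaviour at $n=0$, where the torsion $\mathbbZ/2$ top-cell class also becomes nonzero for every $k$ with $k+m\equiv0\pmod4$ (case (b)), I would attribute to the full low-dimensional cell tower of $\bb{RP}^m$ surviving the suspension, in contrast to the truncated stunted spaces where these torsion classes die.

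The main obstacle is the $K$-theoretic computation of the torsion part of the composite $\widetilde{KO}^{-k}(X_{m,n})\xrightarrow{c}\widetilde{KU}^{-k}(X_{m,n})\xrightarrow{c_{2i}}H^{4i}$. The Chern character sees only the image modulo torsion, so the delicate step is to control the $2$-torsion of $c_{2i}\circ c$ by tracking the Atiyah–Hirzebruch differentials against the $2$-adic valuations of the factorials $(2i-1)!$ and the group orders $2^{\varphi(m,n)}$ — this is what simultaneously forces the $\mathbbZ/2$-valued classes to vanish in the stunted suspensions and keeps them alive when $n=0$. Handling the extension ambiguity at the bottom cell when $n$ is odd, and verifying that the four cases are exhaustive and consistent on their overlaps (notably $n=0$ with $k$ odd, where (b) and the top-cell mechanism of (d) agree), are the remaining bookkeeping hurdles.
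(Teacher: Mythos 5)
Your cohomological bookkeeping and the top-cell pullback argument are fine, and your Lucas-theorem computation of $p_i$ on classes $2^g t\lambda$ is correct and, once combined with injectivity of $q^*\colon H^{4i}(X_{m,n};\mathbb Z)\to H^{4i}(\mathbb{RP}^m;\mathbb Z)$ (valid in the relevant degrees when $n\not\equiv 3\pmod 4$), actually gives a more unified treatment of that part of case (c) than the paper, which splits it into $n$ even, $n\equiv 5\pmod 8$, $n\equiv 1\pmod 8$ (Corollaries \ref{X_m,n_0_mod_2}, \ref{X_m,n_5_mod_8}, \ref{X_m,n_1_mod_8}) and quotes Naolekar--Thakur's $\cc$-triviality theorems. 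However, there are two genuine gaps. First, for $k$ even you never rule out the torsion targets: your degree count correctly identifies the $\mathbb Z_2$ summands of $H^{4i}(X^k_{m,n};\mathbb Z)$ as possible values of Pontrjagin classes, but their vanishing is deferred to ``controlling the $2$-torsion of $c_{2i}\circ\mathfrak c$ by tracking AHSS differentials against $2$-adic valuations,'' which you yourself flag as the main obstacle -- this is the hard content of the theorem, not bookkeeping. The paper disposes of it by concrete means: the cited $\cc$-triviality results, a Bockstein-sequence argument showing $\rho_2$ is injective in the relevant degrees (so \eqref{Pontrjagin_Whitney} and Corollary \ref{rho2_injective} kill the torsion targets), and, for $k+n+1\equiv 6\pmod 8$, a $\widetilde{KO}^{-k}$ exact-sequence argument (Lemma \ref{p-triviality_of_suspension}). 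Worse, the mechanism you propose for case (b) -- that for $n=0$ the torsion top-cell class ``becomes nonzero for every $k$ with $k+m\equiv 0\pmod 4$'' -- is false: when $m$ and $k$ are both even, $\Sigma^k\mathbb{RP}^m$ is $\cc$-trivial and hence $\cP$-trivial (Lemma \ref{triviality_of_suspension_of_RPm}); for $n=0$ non-triviality only ever arises from the free top cell, which forces $m$ odd. (Statement (b) read at face value indeed conflicts with Lemma \ref{triviality_of_suspension_of_RPm}; the paper's proof only establishes it with $m$ odd.) So no amount of spectral-sequence tracking can complete that step.

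Second, in the bottom-cell cases -- $k=0$ with $n\equiv 3\pmod 4$, and $k$ even, $n$ odd, $k+n+1\equiv 0\pmod 4$ -- you assert that the free class in degree $n+k+1$ ``supports a nonzero $p_{(n+1)/4}$,'' i.e.\ that a generator of the relevant $K$-group maps onto the generator coming from the bottom sphere. For the top cell this is automatic, because one pulls back along the collapse map; but the bottom sphere is \emph{included} in $X^k_{m,n}$, not collapsed onto, so what is needed is surjectivity of the restriction in $K$-theory, and the cofibre sequence alone does not give this (the connecting homomorphism need not vanish). Establishing exactly this surjectivity is the content of the paper's Lemma \ref{surjectivity_n_cong_3_mod_4}, proved via Adams' Theorem 7.3 by tracking the generators $\bar\nu^{(2t+2)}$ through a commutative diagram, and of the appeal to Naolekar--Thakur's Theorem 1.2(5) inside Lemma \ref{p-triviality_of_suspension}. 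Without some such input, the first alternative of (c) and the second alternative of (d) remain unproven in your sketch.
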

One can define the complex stunted projective spaces by replacing the real projective spaces with complex ones. We denote these spaces by $Y_{m, n}$, and $Y^k_{m, n}$ analogously. We prove the following triviality result of the Pontrjagin classes of vector bundles over $Y^k_{m, n}$.
\begin{theorem}\label{main2}
    $Y^k_{m, n}$ is $\cP$-trivial if and only if one of the followings hold.\\
    (a) $k$ is odd,\\
    (b) $n = m-1(\neq 0)$ and $(2m+k)\not\equiv 0\pmod 4$.
\end{theorem}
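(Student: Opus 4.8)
The plan is to reduce non-$\cP$-triviality to the existence of a complex bundle carrying a nonzero \emph{even} Chern class. For a real bundle $\xi$ one has $p_i(\xi)=(-1)^i c_{2i}(\xi\otimes\mathbb C)$, and over a suspension (where cup products of reduced classes vanish) the realification of a complex bundle $\eta$ satisfies $p_i=(-1)^i c_{2i}(\eta\oplus\bar\eta)=\pm 2\,c_{2i}(\eta)$. Thus a single complex bundle with $c_{2i}\neq 0$ forces non-$\cP$-triviality, whereas $\cP$-triviality will always be deduced from the vanishing $H^{4i}(Y^k_{m,n})=0$ for all $i\ge 1$. I first record that $\widetilde H^*(Y^k_{m,n};\mathbb Z)$ is free with one generator in each degree $2j+k$ for $n<j\le m$, and that every Pontrjagin class lives in a degree divisible by $4$.

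The sufficiency of (a) and of the vanishing part of (b) is then purely cohomological. If $k$ is odd, every generator lies in an odd degree, so $H^{4i}(Y^k_{m,n})=0$ for all $i\ge 1$ and all Pontrjagin classes vanish. If $n=m-1$ then $Y_{m,n}=\bb{CP}^m/\bb{CP}^{m-1}=S^{2m}$, whence $Y^k_{m,n}=S^{2m+k}$; when $2m+k\not\equiv 0\pmod 4$ its only reduced cohomology sits in a degree $\not\equiv 0\pmod 4$, again killing every Pontrjagin class. Neither case requires a bundle construction.

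For the converse I assume $k$ even, write $k=2\ell$, and exhibit a complex bundle with a nonzero even Chern class. By Bott periodicity $\widetilde K(Y^k_{m,n})\cong\widetilde K(Y_{m,n})=\mathbb Z\langle\beta^{n+1},\dots,\beta^m\rangle$, where $\beta=H-1$ and $\mathrm{ch}(\beta^s)=(e^x-1)^s=x^s+\cdots$; let $\tilde\beta_s$ be the class corresponding to $\beta^s$. For $k\ge 2$ the space is a suspension, so $c_r=\pm(r-1)!\,\mathrm{ch}_r$ and $\mathrm{ch}(\tilde\beta_s)$ is the degree-$2\ell$ shift of $(e^x-1)^s$; hence $c_r(\tilde\beta_s)\neq 0$ exactly for $s+\ell\le r\le m+\ell$. (For $k=0$ the space is not a suspension, but the lowest nonzero component still yields $c_{2i}\big((H-1)^{2i}\big)=\pm(2i-1)!\,x^{2i}$ by Newton's identity whenever $2i\in\{n+1,\dots,m\}$.) It remains to land an \emph{even} index $2i$ in the admissible range: taking $s=n+1$ gives an interval of length $m-n$, which contains an even integer as soon as $n\le m-2$, producing non-$\cP$-triviality throughout that range. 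When $n=m-1$ the interval collapses to the single value $m+\ell$, which is even if and only if $2m+k\equiv 0\pmod 4$, reproducing (b) exactly.

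The principal obstacle is the bookkeeping in this last step. One must propagate the Chern-character degrees correctly through the $\ell$-fold Bott shift, check that the vanishing of cup products on the suspension genuinely reduces each $c_r$ to a nonzero integral multiple of $\mathrm{ch}_r$, and—most delicately—arrange the parity so that the surviving nonzero Chern class occupies an even slot $c_{2i}$ rather than an odd one. The boundary case $n=m-1$ is where the argument is tightest: the admissible range degenerates to a point, so the entire dichotomy turns on the parity of $m+\ell=(2m+k)/2$, and one must confirm the \emph{integral} (not merely rational) non-vanishing of $c_{2i}$ there using the freeness of $\widetilde H^*$. This is exactly the mechanism behind condition (b).
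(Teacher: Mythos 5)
Your proof is correct, and it follows a genuinely different route from the paper's. The paper never computes the $K$-theory of the stunted space itself: it reduces to two-cell complexes via the cofiber sequence $Y_{n+2,n}\to Y_{m,n}\to Y_{m,n+2}$ (pulling a bundle back through the cohomology isomorphisms in degrees $2n+2$ and $2n+4$), identifies $Y_{m,m-2}$ with $S^{2m}\vee S^{2m-2}$ ($m$ odd) or with $\Sigma^{2m-4}\bb{CP}^2$ ($m$ even), and then settles $\Sigma^k\bb{CP}^2$ for even $k$ in Lemma \ref{suspension_of_CP2} by a mod-$8$ case analysis: degree-one maps to spheres for $k\equiv 0\pmod 4$, and $KO$- resp.\ $K$-theoretic exact sequences of $S^2\to\bb{CP}^2\to S^4$ combined with Lemma \ref{comparison_lemma} for $k\equiv 2,6\pmod 8$. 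You instead work directly on $Y^k_{m,n}$: Bott periodicity plus the standard computation $\widetilde K(Y_{m,n})=\bb Z\langle\beta^{n+1},\dots,\beta^m\rangle$, the Chern character, and the suspension identity $c_r=\pm(r-1)!\,\mathrm{ch}_r$ produce explicit (virtual) bundles with a nonzero even Chern class, and the whole theorem reduces to whether the interval $[\,n+1+k/2,\ m+k/2\,]$ contains an even integer --- which happens iff $m-n\ge 2$ or $m+k/2$ is even, i.e.\ iff $2m+k\equiv 0\pmod 4$ when $n=m-1$. Your route is uniform in $k$ (no mod-$8$ cases, no stable splittings) and makes the provenance of the condition $2m+k\equiv 0\pmod 4$ transparent; the paper's route uses lighter machinery at each step (cofiber sequences, cohomology of spheres and of $\bb{CP}^2$, and the comparison lemma) and runs parallel to its treatment of the real case in Section \ref{4th}.

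Two points in your write-up deserve to be made explicit. First, for $k=0$ the space is not a suspension, so neither $c_r=\pm(r-1)!\,\mathrm{ch}_r$ for all $r$ nor $p_i=\pm 2c_{2i}$ is automatic; but with your choice $s=2i$ the class $\tilde\beta_{2i}$ has vanishing Chern classes below degree $4i$ (the lowest-component Newton argument), so every decomposable term in \eqref{chern_pontrjagin} vanishes and $p_i\bigl((\tilde\beta_{2i})_{\bb R}\bigr)=\mp 2\,c_{2i}(\tilde\beta_{2i})\neq 0$; this is the statement your parenthetical remark should carry, and it is the only instance of the interval claim you actually use (or that is true) at $k=0$. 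Second, elements of $\widetilde K$ are virtual; since trivial bundles have trivial characteristic classes, a virtual class whose realification has $p_i\neq 0$ yields an honest real bundle with $p_i\neq 0$, which is what non-$\cP$-triviality requires. Neither point affects correctness; both close small expository gaps.
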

 In this article, the word suspension should always mean reduced suspension. While the primary focus is on $\cP$-triviality of stunted real and complex projective spaces, we also proved some results about $\cP$-triviality of certain CW complexes, which will be interesting in other contexts. Using Cadek and Vanzura's result \cite[Theorem 1, p. 757]{cadek1993}, we proved that the third suspension of the compact orientable surface of genus $\geq 2$ is never $\cP$-trivial. We have also dealt with $\cP$-triviality results of the product of certain manifolds.

The article is organized as follows. In Section \ref{2nd}, we discuss some preliminaries and some general observations in Section \ref{3rd}. The proof of the main theorems is given in Sections \ref{4th} and \ref{5th}, respectively.
\section{Preliminaries}\label{2nd}
Let $\xi$ be a real vector bundle over a topological space $X$. We denote the complexification $\xi\otimes\bb C$ of $\xi$ by $\xi_\bb C$, whose fibres $F_x(\xi_\bb C)$ are the complexification $F_x(\xi)\otimes \bb C$ of fibres $F_x(\xi)$ of $\xi$. For $0\leq j\le \lfloor \mathrm{dim}(X)/4 \rfloor$, the $j$-th Pontrjagin class $p_j(\xi)$ of $\xi$ is defined to be $(-1)^jc_{2j}(\xi_\bb C)$ where $c_{2j}(\xi_\bb C)$ is the $2j$-th Chern class of $\xi_\bb C$. The total Pontrjagin class $p(\xi)$ of $\xi$ is defined to be  $\sum_{j\ge 0}p_j(\xi)$. We call $X$ to be $\cP$-trivial if $p(\xi)=1$ for any real vector bundle $\xi$ over $X$. \\

If $\xi$ is a complex vector bundle and $\xi_\bb R$ denotes the underlying real vector bundle of $\xi$, then $\xi_\bb R\otimes \bb C$ is canonically isomorphic to $\xi\oplus \bar{\xi}.$(See \cite[Lemma 15.4]{milnor1974}.) Let us denote the total Chern class $\sum_{j\ge 0} c_j(\xi)$ by $c(\xi)$. It is easy to see using the formula for the total Chern class for Whitney sum that 
\begin{equation}\label{chern_pontrjagin}
    p_j(\xi_\bb R)=c_{j}^2(\xi)-2c_{j+1}(\xi)c_{j-1}(\xi)+\cdots\pm 2c_{1}(\xi)c_{2j-1}(\xi)\mp 2c_{2j}(\xi).
\end{equation}
It is worth noting that since $\xi_\bb C$ is isomorphic to $\bar{\xi}_\bb C$ for a real vector bundle $\xi$, the odd-dimensional Chern classes of $\xi_\bb C$ are order two elements of the integral cohomology. Since the Pontrjagin classes are integral cohomology classes of dimension multiple of four, it is important to know the cohomology of the spaces under consideration. We give a brief description of the cohomology groups of $X^k_{m,n}$ and of $Y^k_{m,n}$ in this section.\\

Recall that the cellular chain complex for the integral homology of $\bb{RP}^m$ is given by
\[
\cdots\to 0\to \bb Z\xrightarrow{d_m} \bb Z\xrightarrow{d_{m-1}} \cdots \xrightarrow{d_0} \bb Z,
\]
where the degrees of the boundary maps are given by $deg(d_j)= 1+(-1)^j, 0\leq j\leq n$. Since, for $n\le m, \bb{RP}^n$ is the $n$-th skeleton of $\bb{RP}^m$, the stunted projective space $X_{m,n}$ has one cell $\bb{RP}^j$ in each dimension $n+1\le j\le m$ and in dimension 0. the cellular chain complex for $X_{m,n}$ is then given by  
\begin{equation}\label{stunted_chain_complex}
    \cdots\to 0\to \bb Z\xrightarrow{d_m} \bb Z\xrightarrow{d_{m-1}} \cdots \xrightarrow{d_{n+2}}\bb Z\xrightarrow{d_{n+1}} 0\to\cdots\to 0 \xrightarrow{d_0} \bb Z.
\end{equation}
If $n$ is odd, $d_{n+2} =0$ and if $n$ is even, then $d_{n+2}$ is the multiplication by 2 homomorphism. We have the following description of the integral homology of $X_{m,n}$.
\begin{equation}\label{stunted_homology}
    H_j(X_{m,n}; \bb Z)=\begin{cases}
                          \mathbb{Z} &\textit{if $j=0, j=n+1$ even, $j=m$ odd,}\\
                          \mathbb{Z}_2 &\textit{if $j$ is odd, and $n<j< m$,}\\
                          0 & \textit{otherwise.}
                      \end{cases}
\end{equation}
Since $X_{m,n}$ is path connected, we have $H_j(\Sigma^kX_{m,n}; \bb Z)\cong H_{j-k}(X_{m,n}; \bb Z)$ and hence,
\begin{equation}\label{suspension_stunted_homology}
   H_j(X^k_{m,n}; \bb Z)=\begin{cases}
                            \mathbb{Z} &\textit{if $j=0, j-k=n+1$ even, $j-k=m$ odd,}\\
                            \mathbb{Z}_2 &\textit{if $j-k$ is odd, and $n+k<j< m+k$,}\\
                            0 & \textit{otherwise.}
                        \end{cases} 
\end{equation}
Using the universal coefficient theorem, we have the following description of the integral cohomology of $X^k_{m, n}$.
\begin{equation}\label{suspension_stunted_cohomology}
    H^j(X^k_{m,n}; \bb Z)=\begin{cases}
                            \mathbb{Z} &\textit{if $j=0, j-k=n+1$ even, $j-k=m$ odd,}\\
                            \mathbb{Z}_2 &\textit{if $j-k$ is even, and $n+k+1<j\le m+k$,}\\
                            0 & \textit{otherwise.}
                        \end{cases}
\end{equation}
Note that for $n=0=k$ this matches with the usual description of the integral cohomology of $\bb {RP}^n.$

Let us describe the integral cohomology groups of $Y^k_{m,n}$. Recall that $\bb{CP}^i$ is the $2i$-th skeleton of $\bb {CP}^m$ for $0\le i\le m$, and there is one cell in each even dimension. The cellular boundary maps are all trivial, and the chain groups are, therefore, isomorphic to the integral homology groups. Since all the boundary maps are already trivial, collapsing the $n$-th skeleton, $\bb {CP}^n$, does not effect the cohomology groups except for all non-zero dimensions less than $n+1$, where all the cohomology groups become trivial. Hence we have,

\begin{equation}\label{complex_stunted_cohomology}
    H^j(Y_{m, n}; \bb Z)=\begin{cases}
                            \bb Z  &\textit{if $j=0$ or $2n+2\le j\le 2m$ is even,}\\
                            0      & \textit{otherwise.} 
                       \end{cases}
\end{equation}
                       
Therefore, as $Y^k_{m,n}$ is path connected, we have the following description of the integral cohomology of $Y^k_{m,n}$.

\begin{equation}\label{suspension _complex_stunted_cohomology}
    H^j(Y^k_{m, n}; \bb Z)=\begin{cases}
                            \bb Z  &\textit{if $j=0$ or $2n+k+2\le j\le 2m+k, j-k$ is even,}\\
                            0      & \textit{otherwise.} 
                         \end{cases}
\end{equation}

We often use the following comparison lemma to conclude the $\cP$-triviality. Recall the reduced $KO$ group $\widetilde{KO}(X)$ of a space $X$. Any continuous map $f\colon X\to Y$ to another space $Y$ induces the homomorphism $f^*\colon \widetilde{KO(Y)}\to \widetilde{KO(X)}$ that maps an isomorphism class of (real) vector bundle to the isomorphism class of its pullback bundle. We shall denote the homomorphisms induced on the reduced complex $K$-theory and (singular) cohomology by $f^*$ as well, and the notation should always be clear from the context. We refer to \cite{Fujii1967} for the $KO$ groups of stunted real projective spaces.

\begin{lemma}\label{comparison_lemma}
    Let $Y$ be a space with a map $f\colon Y\to X$, which induces a surjective homomorphism on the reduced real or complex $K$-groups. Then, if $Y$ be $\cP$-trivial, so is $X$.
\end{lemma}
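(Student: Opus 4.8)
The plan is to reduce $\cP$-triviality to a statement about \emph{stable} classes and then exploit the naturality of Pontrjagin classes. The two structural facts I would isolate at the outset are: (i) Pontrjagin classes are stable, so $p(\xi)=p(\xi\oplus\varepsilon)$ for any trivial bundle $\varepsilon$, whence the total class $p$ descends to a well-defined map on the reduced group $\widetilde{KO}(Y)$ sending $[\eta]-\rank\eta$ to $p(\eta)$; and (ii) naturality, $f^*p(\eta)=p(f^*\eta)$. Using (i), the hypothesis that $Y$ is $\cP$-trivial is \emph{equivalent} to saying this induced map is identically $1$ on all of $\widetilde{KO}(Y)$, because over a finite CW complex every element of $\widetilde{KO}(Y)$ is represented by $[\eta]-\rank\eta$ for a genuine real bundle $\eta$.

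In the real case, given an arbitrary real bundle $\xi$ over $X$, I would consider $[\xi]-\rank\xi\in\widetilde{KO}(X)$. By the assumed surjectivity of $f^*$ there is $\alpha\in\widetilde{KO}(Y)$ with $f^*\alpha=[\xi]-\rank\xi$; writing $\alpha=[\eta]-\rank\eta$ for a genuine bundle $\eta$ over $Y$, it follows that $f^*\eta$ and $\xi$ are stably isomorphic. Stability and naturality then give $p(\xi)=p(f^*\eta)=f^*p(\eta)$, and since $Y$ is $\cP$-trivial, $p(\eta)=1$, so $p(\xi)=1$. As $\xi$ was arbitrary, $X$ is $\cP$-trivial.

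In the complex case the argument must be routed through the complexification, since by definition $p_j(\xi)=(-1)^jc_{2j}(\xi_{\bb C})$ while the hypothesis only controls complex bundles. I would first record that $c_{2j}$ is itself stable and natural, hence descends to a map on $\widetilde K(Y)$. Given a real $\xi$ over $X$, I lift $[\xi_{\bb C}]-\rank\xi_{\bb C}\in\widetilde K(X)$ through the surjection $f^*$ to a class represented by a genuine complex bundle $\zeta$ over $Y$, so that $f^*\zeta$ is stably isomorphic to $\xi_{\bb C}$. The next step is to relate $c_{2j}(\xi_{\bb C})$ to the Pontrjagin classes of the underlying real bundle $\zeta_{\bb R}$: by \cite[Lemma 15.4]{milnor1974} one has $(\zeta_{\bb R})_{\bb C}\cong\zeta\oplus\bar\zeta$, and since $\xi_{\bb C}$ is self-conjugate, $f^*(\zeta\oplus\bar\zeta)$ is stably isomorphic to $\xi_{\bb C}\oplus\xi_{\bb C}$. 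As $Y$ is $\cP$-trivial, $p_j(\zeta_{\bb R})=(-1)^jc_{2j}(\zeta\oplus\bar\zeta)=0$, which by naturality transports to the vanishing of $c_{2j}(\xi_{\bb C}\oplus\xi_{\bb C})$.

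The main obstacle is precisely this last step: the hypothesis naturally yields information about $\xi_{\bb C}\oplus\xi_{\bb C}$, equivalently about $p(\xi)^2$, rather than about $\xi_{\bb C}$ itself, so one must still extract the vanishing of the individual even Chern classes $c_{2j}(\xi_{\bb C})$. I would handle this by treating $c_{2j}$ as stable natural classes throughout and using the conjugation symmetry $c_k(\bar\zeta)=(-1)^kc_k(\zeta)$ together with the self-conjugacy $\xi_{\bb C}\cong\overline{\xi_{\bb C}}$ recorded just after \eqref{chern_pontrjagin}, keeping careful track of the fact that the odd Chern classes of a complexification are only $2$-torsion. This $2$-torsion bookkeeping is the delicate point, and it is the step I expect to demand the most care in turning the above sketch into a complete argument.
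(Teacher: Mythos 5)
Your real case is correct and is, up to phrasing, the paper's own proof: the paper argues contrapositively (lift a bundle $\xi$ over $X$ with $p_j(\xi)\neq 0$ through the surjection $f^*$ and use naturality), while you argue directly; your explicit handling of the stable-isomorphism point (the lift only gives $f^*\eta$ stably isomorphic to $\xi$, which suffices because Pontrjagin classes are stable) is a detail the paper suppresses when it writes $f^*([\eta])=[\xi]$ and then applies $p_j$.

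The complex case of your proposal is genuinely incomplete, and the deferred ``$2$-torsion bookkeeping'' is not a routine finish: it fails as stated. As you observe, $\cP$-triviality of $Y$ only controls $p_j(\zeta_{\bb R})=\pm c_{2j}(\zeta\oplus\bar{\zeta})$ for a complex bundle $\zeta$ over $Y$, never $c_{2j}(\zeta)$ itself, so pulling back you obtain only $c_{2j}(\xi_{\bb C}\oplus\xi_{\bb C})=0$. In the lowest degree this relation reads $2c_2(\xi_{\bb C})+c_1(\xi_{\bb C})^2=0$, and since $c_1(\xi_{\bb C})$ has order at most $2$, all it yields is $4c_2(\xi_{\bb C})=0$; the conjugation symmetry you invoke is precisely what makes $c_1(\xi_{\bb C})$ $2$-torsion and adds nothing beyond that. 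Hence when $H^4(X;\bb Z)$ contains $2$-torsion, this route cannot rule out $p_1(\xi)=-c_2(\xi_{\bb C})$ being a nonzero element of order $2$ or $4$. Your argument does close whenever the groups $H^{4j}(X;\bb Z)$ have no $2$-torsion: then the odd Chern classes of $\xi_{\bb C}$ vanish outright, and an induction on $j$ in $0=c_{2j}(\xi_{\bb C}\oplus\xi_{\bb C})=2c_{2j}(\xi_{\bb C})+(\text{products of lower Chern classes})$ gives $p_j(\xi)=0$. That restricted version in fact covers the paper's only application of the complex case, namely Lemma \ref{X_m,n_n_cong_3_mod_4}, where the relevant space is $X_{n+2,n}\simeq S^{n+1}\vee S^{n+2}$ with torsion-free cohomology. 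You should also know that the paper's own proof does not resolve this point either: it merely asserts that ``the complex case follows parallelly,'' but a complex bundle has no Pontrjagin classes, so any literal parallel argument must pass through $\zeta_{\bb R}$ and meets exactly the obstruction you identified. In short, you have isolated a real subtlety that the paper glosses over, but as submitted your proof establishes the lemma's complex case only under an additional torsion-freeness hypothesis.
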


\begin{proof}
    We shall prove the real case only; the complex case follows parallelly. If possible, let $X$ be not $\cP$-trivial, and $\xi$ be a vector bundle over $X$ with at least one nontrivial Pontrjagin class $p_j(\xi)$, say. Since $f^*$ is surjective, there is a bundle $\eta$ over $Y$ such that $f^*([\eta])=[\xi]$. Now, as $f^*(p_j(\eta))=p_j(\xi)\neq 0$, we must have $p_j(\eta)\neq 0$. This contradicts that $Y$ is $\cP$-trivial. Hence $X$ must be $\cP$-trivial.
\end{proof}

It is evident, from the definition of Pontrjagin classes, that a space which is $\cc$-trivial is always $\cP$-trivial. We shall recall the following relations between different characteristic classes, which we will use in the following sections.

Let us consider the reduction modulo 2 homomorphism $\rho_2\colon H^j(X; \bb Z)\to H^j(X, \bb Z_2), 0\le j\le \mathrm{dim}(X)$ induced by the quotient map from $\bb Z$ to $\bb Z_2$. Since $\rho_2$ maps the total Chern class to the total Stiefel-Whitney class, it follows that for $0\le j\le \lfloor \mathrm{dim}(X)/4 \rfloor$, we have (see \cite[15-A]{milnor1974})
\begin{equation}\label{Pontrjagin_Whitney}
    \rho_2(p_j(\xi))=w_{2j}^2(\xi).
\end{equation}
Let $\rho_4\colon H^j(X; \bb Z)\to H^j(X; \bb Z_4)$ be the reduction modulo four map and $i_*\colon H^j(X; \bb Z_2)\to H^j(X; \bb Z_4)$ be the homomorphism induced by the inclusion $i\colon \bb Z_2\to \bb Z_4$. Let $\mathfrak{P}:H^{2j}(Y;\mathbb{Z}_2)\rightarrow H^{4j}(Y;\mathbb{Z}_4)$ denote the Pontrjagin square operation. Then the Pontrjagin class $p_j(\xi)$ is related to the Stiefel-Whitney classes $w_1, \ldots, w_{4j}$ as follows (see \cite[Lemma 1]{Milnor1958}).
\[
\mathfrak{P}(w_{2j}(\xi)) = \rho_4(p_j(\xi))+i_*(f_j(w_1(\xi), \ldots, w_{4j}(\xi)),
\]
where $f_j$ is a certain mod 2 polynomials. In particular, for $j=1, f_j(w_1(\xi), \ldots, w_{4}(\xi))=w_1(\xi)Sq^1(w_2(\xi))+w_0(\xi)w_4(\xi),$ and, if $\xi$ is orientable or $w_2(\xi)=0$, we have (see \cite{cadek1993})
\begin{equation}\label{Pontrjagin_Square}
    \mathfrak{P}(w_{2}(\xi)) = \rho_4(p_1(\xi))+i_*( w_{4}(\xi)).
\end{equation}

\section{Generalities}\label{3rd}
In this section, we derive general observations and the relation between the trivialities of different characteristic classes. 
Let $a\in H^{4n}(S^{4n}; \bb Z)$ be a generator. Recall that for each $m$ divisible by $(2n-1)!$, there exists a unique $\xi\in \widetilde{K}(S^{4n})$ with $c_{2n}(\xi)=ma$. (See \cite[Chapter 20, Corollary 9.8]{hu}.) From (\ref{chern_pontrjagin}), we have $p_n(\xi_\bb R)=\pm 2c_{2n}(\xi)\neq 0$. Therefore $S^n$ is not $\cP$-trivial if $m\equiv 0\pmod 4$. Since $S^n$ has nontrivial cohomology groups in dimension 0 and $n$ only and Pontrjagin classes are $4j$-dimensional cohomology classes, $0\le j\le \lfloor n/4\rfloor$ , we have,
\begin{lemma}\label{spheres}
$S^n$ is not $\cP$-trivial if and only if $n\equiv 0\pmod 4$.$\hfill\square$ 
\end{lemma}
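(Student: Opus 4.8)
The plan is to exploit the single structural fact that a Pontrjagin class $p_j(\xi)$ lives in $H^{4j}(S^n;\mathbb{Z})$, while $S^n$ has reduced integral cohomology concentrated in degree $n$. Both implications then reduce to comparing $n$ with the positive multiples of $4$, so the argument is essentially a degree count in one direction and a single explicit construction in the other.

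For the \emph{only if} direction I would argue by contraposition. Suppose $n\not\equiv 0\pmod 4$. Then for every $j\ge 1$ we have $4j\neq n$, so $H^{4j}(S^n;\mathbb{Z})=0$; consequently $p_j(\xi)=0$ for every real bundle $\xi$ over $S^n$ and every $j\ge 1$, whence $p(\xi)=1$. Thus $S^n$ is $\cP$-trivial, which is exactly the contrapositive of the claim that non-$\cP$-triviality forces $n\equiv 0\pmod 4$. This half needs nothing beyond the cohomology of the sphere.

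For the \emph{if} direction, write $n=4j$ with $j\ge 1$ and produce a single real bundle whose $j$-th Pontrjagin class is nonzero. I would take the complex bundle $\xi$ over $S^{4j}$ supplied by the cited result (Hu, Chapter 20, Corollary 9.8) with $c_{2j}(\xi)=m\,a$, where $a$ generates $H^{4j}(S^{4j};\mathbb{Z})$ and $m$ is any nonzero multiple of $(2j-1)!$, and pass to its underlying real bundle $\xi_{\mathbb{R}}$. Every Chern class $c_i(\xi)$ with $0<i<2j$ vanishes, since it would lie in $H^{2i}(S^{4j};\mathbb{Z})=0$; hence in formula (\ref{chern_pontrjagin}) every mixed product $c_a c_b$ with $a+b=2j$ and $a,b\ge 1$ drops out, leaving only the last term. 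This gives $p_j(\xi_{\mathbb{R}})=\pm 2\,c_{2j}(\xi)=\pm 2m\,a\neq 0$, so $S^{n}=S^{4j}$ is not $\cP$-trivial.

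The content here is light: the \emph{only if} half is pure dimension counting, and the \emph{if} half rests entirely on the existence of a complex bundle over $S^{4j}$ with nontrivial top Chern class. So the only genuine input—what I would flag as the crux—is that existence statement, namely the $K$-theoretic fact that some integral multiple of a generator of $H^{4j}(S^{4j};\mathbb{Z})$ is realized as $c_{2j}$ of a complex bundle; the collapse of (\ref{chern_pontrjagin}) and the degree count are then immediate.
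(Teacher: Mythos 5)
Your proof is correct and follows essentially the same route as the paper: the \emph{only if} direction is the same dimension count on $H^{4j}(S^n;\mathbb{Z})$, and the \emph{if} direction uses the very same bundle from Hu's result, with formula (\ref{chern_pontrjagin}) collapsing to $p_j(\xi_{\mathbb{R}})=\pm 2c_{2j}(\xi)\neq 0$ since all intermediate Chern classes vanish on the sphere.
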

Therefore, $\cW$-triviality of a space does not imply neither $\cc$-triviality, nor $\cP$-triviality, in fact, for $n>2, S^{4n}$ is $\cW$-trivial but neither $\cc$-triviality, nor $\cP$-triviality. Since $\Sigma^k S^n\simeq S^{n+k}$, the same example shows that no analogue of Atiyah-Hirzebruch holds for $\cP$-triviality. If, however, $\rho_2$ is injective whenever $H^{4j}(X; \bb Z)\neq 0$, it is clear from \eqref{Pontrjagin_Whitney} that $X$ is $\cP$-trivial. 

Recall that the cup length $\mathrm{cup}(X)$ of a cohomology ring $H^{*}(X; R)$ is the largest $k$ such that there exists $a_1.a_2,\cdots,a_k \in H^{*}(X;R)$ with $a_1 \smile a_2 \smile \cdots \smile a_k\neq 0$. For any vector bundle $\xi$ over $X$, if one of the three conditions in the next lemma holds, we have $\rho_2(p(\xi))=0$ from \eqref{Pontrjagin_Whitney}. Hence, we have,

\begin{lemma}\label{suspension}
Let $\rho_2$ is be injective whenever $H^{4j}(X; \mathbb Z)$ is non-trivial, $ 0\le j\le \lfloor \mathrm{dim}(X)/4\rfloor$. Then $X$ is $\mathcal P$-trivial if one of the followings hold.\\
(a) $\mathrm{cup}(X)=1$,\\
(b) $w_{2j}^2(\xi)=0$ for all $j$ and all bundles $\xi$ over $X$,\\
(c) $Sq^{2j}\colon H^{2j}(X; \bb Z_2)\to H^{4j}(X; \bb Z_2)$ is the zero homomorphism for all $j$.$\hfill\square$   
\end{lemma}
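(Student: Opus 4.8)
The plan is to show that under the standing hypothesis---that $\rho_2$ is injective on $H^{4j}(X;\mathbb{Z})$ whenever this group is nontrivial---each of the three conditions (a), (b), (c) forces the mod $2$ reduction $\rho_2(p_j(\xi))$ to vanish for every bundle $\xi$ and every relevant $j$, and then to invoke injectivity of $\rho_2$ to lift this back to the integral statement $p_j(\xi)=0$. The key identity throughout is \eqref{Pontrjagin_Whitney}, namely $\rho_2(p_j(\xi))=w_{2j}^2(\xi)$. Since $\rho_2$ is injective on the groups where the $p_j(\xi)$ live, it suffices in every case to verify that $w_{2j}^2(\xi)=0$ for all $j$ with $0\le j\le\lfloor \dim(X)/4\rfloor$ and all $\xi$; this is the single reduction that organizes the whole argument.

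For the three conditions I would argue as follows. Condition (b) is immediate: it literally asserts $w_{2j}^2(\xi)=0$, so \eqref{Pontrjagin_Whitney} gives $\rho_2(p_j(\xi))=0$ directly. For condition (a), I would observe that $w_{2j}(\xi)\in H^{2j}(X;\mathbb{Z}_2)$ and that its square $w_{2j}^2(\xi)=w_{2j}(\xi)\smile w_{2j}(\xi)$ is a cup product of two positive-dimensional classes; whenever $j\ge 1$ the cup length bound $\mathrm{cup}(X)=1$ forces any such product of two reduced classes to vanish, so again $w_{2j}^2(\xi)=0$. (The case $j=0$ contributes only the constant class $w_0=1$, whose ``Pontrjagin class'' $p_0=1$ is the normalization and is not at issue.) For condition (c), I would use the fact that for any mod $2$ class $x\in H^{2j}(X;\mathbb{Z}_2)$ the Steenrod square $Sq^{2j}(x)=x^2$ in top-matching degree, i.e. $Sq^{2j}(w_{2j}(\xi))=w_{2j}^2(\xi)$; if $Sq^{2j}$ is the zero homomorphism then this square vanishes identically, giving the desired conclusion.

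Having reduced all three cases to $w_{2j}^2(\xi)=0$, I would close the argument uniformly: by \eqref{Pontrjagin_Whitney} we get $\rho_2(p_j(\xi))=0$, and since $p_j(\xi)\in H^{4j}(X;\mathbb{Z})$ lies in a group on which $\rho_2$ is assumed injective (or the group is zero, in which case $p_j(\xi)=0$ trivially), we conclude $p_j(\xi)=0$ for every $j$. Summing over $j$ yields $p(\xi)=1$ for every $\xi$, which is exactly $\mathcal{P}$-triviality of $X$.

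I expect the only genuinely delicate point to be the standard fact invoked in case (c), that $Sq^{2j}$ acts as squaring on classes of degree exactly $2j$; this is the instability/unstability property of Steenrod operations ($Sq^i(x)=x^2$ when $\deg x=i$, and $Sq^i(x)=0$ when $i>\deg x$), and I would simply cite it rather than reprove it. Everything else is bookkeeping once the reduction to the mod $2$ square is made; the hypothesis on injectivity of $\rho_2$ does the essential lifting and is what makes the implication go from a mod $2$ vanishing to an integral one. The lemma statement already flags this reduction in the sentence preceding it, so the proof is essentially a three-line case check built on \eqref{Pontrjagin_Whitney}.
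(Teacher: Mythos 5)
Your proposal is correct and follows essentially the same route as the paper, which disposes of the lemma in the sentence preceding its statement: each of (a), (b), (c) forces $w_{2j}^2(\xi)=0$, so $\rho_2(p_j(\xi))=0$ by \eqref{Pontrjagin_Whitney}, and the assumed injectivity of $\rho_2$ on the nonzero groups $H^{4j}(X;\mathbb{Z})$ then yields $p(\xi)=1$. Your case-by-case verification (cup length for (a), tautology for (b), the unstable relation $Sq^{2j}x=x^2$ in degree $2j$ for (c)) is exactly the bookkeeping the paper leaves implicit.
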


If $X = \Sigma Y$ for some space $Y$, it is well-known that $\mathrm{cup}(X)=1$. Therefore, we have, 
\begin{corollary}\label{rho2_injective}
If $\rho_2\colon H^{4j}(X; \mathbb Z)\to H^{4j}(X; \mathbb Z_2)$ is injective whenever $H^{4j}(X; \mathbb Z)\neq 0$, and $X$ is a suspension, then $X$ is $\mathcal P$-trivial.\hfill$\square$
\end{corollary}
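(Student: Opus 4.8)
The plan is to reduce Corollary~\ref{rho2_injective} directly to Lemma~\ref{suspension}, since the corollary is essentially the special case $(a)$ of that lemma packaged together with the preceding remark about suspensions. First I would recall that the hypothesis of the corollary is precisely the injectivity assumption of Lemma~\ref{suspension}, namely that $\rho_2\colon H^{4j}(X;\bb Z)\to H^{4j}(X;\bb Z_2)$ is injective whenever $H^{4j}(X;\bb Z)\neq 0$ for $0\le j\le \lfloor \mathrm{dim}(X)/4\rfloor$. So the only additional input needed is to verify that condition $(a)$, $\mathrm{cup}(X)=1$, follows from the assumption that $X$ is a suspension.

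Next I would invoke the stated fact that if $X=\Sigma Y$ for some space $Y$, then $\mathrm{cup}(X)=1$; this is the cited well-known vanishing of all cup products of positive-degree reduced cohomology classes in a suspension. For completeness one could note the reason: the reduced diagonal $X\to X\wedge X$ is null-homotopic for a suspension, so every cup product $a_1\smile a_2$ of reduced classes vanishes, forcing the cup length to be $1$. With $\mathrm{cup}(X)=1$ established, condition $(a)$ of Lemma~\ref{suspension} is satisfied.

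Finally I would apply Lemma~\ref{suspension} with this verified hypothesis to conclude that $X$ is $\cP$-trivial, completing the proof. The argument is a short deduction rather than a computation, so I do not anticipate any serious obstacle; the only point requiring care is making sure the injectivity hypothesis is quoted exactly as in Lemma~\ref{suspension} (over the correct range of degrees and only where $H^{4j}(X;\bb Z)$ is nonzero), so that the application of the lemma is literally valid. No genuinely new idea is needed beyond combining the suspension cup-length fact with part $(a)$ of the lemma.
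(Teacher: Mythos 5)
Your proposal is correct and matches the paper's own argument exactly: the paper likewise observes that a suspension $X=\Sigma Y$ has $\mathrm{cup}(X)=1$ and then applies part (a) of Lemma \ref{suspension} to conclude $\cP$-triviality. Your added justification via the null-homotopic reduced diagonal is a harmless elaboration of the "well-known" cup-length fact the paper cites without proof.
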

\begin{lemma}\label{square_of_Whitney_class}
For any real vector bundle $\xi$ over a $\mathcal P$-trivial space must satisfy $w_{2j}^2(\xi)=0$.
\end{lemma}
\begin{proof}
    The proof immediately follows from (\ref{Pontrjagin_Whitney}).
\end{proof}
\begin{corollary}\label{triviality_of_RPn}
    Since, for $m\geq 4$, the square of the second Stiefel-Whitney class of the tautological bundle $\gamma_m$ over $\mathbb {RP}^m$ is $w_4(\gamma_m)$, which is nonzero. Therefore, $\mathbb {RP}^m$ is not $\mathcal P$-trivial by the previous lemma.
\end{corollary}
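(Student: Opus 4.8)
The plan is to invoke the contrapositive of Lemma \ref{square_of_Whitney_class}: a $\cP$-trivial space admits no real bundle $\xi$ with $w_{2j}^2(\xi)\neq 0$ for some $j$, so it suffices to exhibit, for each $m\ge 4$, a single real bundle over $\bb{RP}^m$ whose squared even-degree Stiefel-Whitney class is nonzero. This reduces the entire claim to one mod-$2$ computation.

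First I would fix the standard identification $H^*(\bb{RP}^m;\bb Z_2)\cong \bb Z_2[a]/(a^{m+1})$ with $a\in H^1$ the generator, and let $\gamma$ denote the tautological line bundle, so that $w(\gamma)=1+a$. For a Whitney multiple $k\gamma$ the product formula gives $w(k\gamma)=(1+a)^k$, whose degree-$2$ and degree-$4$ coefficients are $\binom{k}{2}$ and $\binom{k}{4}$ modulo $2$. Choosing $k$ so that both binomial coefficients are odd---for instance the bundle $\gamma_m$ with $w(\gamma_m)=(1+a)^6=1+a^2+a^4+a^6$ over $\bb Z_2$---yields $w_2(\gamma_m)=a^2$ and $w_4(\gamma_m)=a^4$, so that $w_2(\gamma_m)^2=a^4=w_4(\gamma_m)$, exactly the identity asserted in the statement.

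Since $a^4\neq 0$ in $H^4(\bb{RP}^m;\bb Z_2)$ precisely when $m\ge 4$ (the only relation in the truncated polynomial ring being $a^{m+1}=0$), the class $w_2(\gamma_m)^2=a^4$ is nonzero, which directly contradicts the conclusion of Lemma \ref{square_of_Whitney_class} were $\bb{RP}^m$ assumed $\cP$-trivial. Hence $\bb{RP}^m$ is not $\cP$-trivial. I do not anticipate any genuine obstacle here: the whole content is the identification of a bundle with nonvanishing $w_2^2$, after which the previous lemma closes the argument at once. The only point deserving attention is the hypothesis $m\ge 4$, which is exactly what guarantees that $a^4$ survives in $H^*(\bb{RP}^m;\bb Z_2)$.
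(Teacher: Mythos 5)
Your proof is correct and follows the same overall route as the paper: exhibit a bundle over $\mathbb{RP}^m$ with a nonvanishing squared even Stiefel--Whitney class and invoke Lemma \ref{square_of_Whitney_class}. The notable difference is your choice of bundle, and it actually repairs a defect in the paper's own formulation. Everywhere else in the paper (e.g.\ in the proof of Lemma \ref{P_triviality_necessary}), $\gamma_m$ denotes the tautological \emph{line} bundle, for which $w(\gamma_m)=1+a$; hence $w_2(\gamma_m)=0=w_4(\gamma_m)$, and the claim ``$w_2^2(\gamma_m)=w_4(\gamma_m)\neq 0$'' is false as literally written. Your replacement bundle $6\gamma_m$, with $w(6\gamma_m)=(1+a)^6=1+a^2+a^4+a^6$ in $\mathbb{Z}_2$-coefficients, genuinely satisfies $w_2^2=w_4=a^4\neq 0$ precisely when $m\ge 4$ (the orthogonal complement bundle $\gamma_m^{\perp}$, with $w(\gamma_m^{\perp})=1+a+\cdots+a^m$, would serve equally well and is perhaps what the authors intended); even $2\gamma_m$ or $3\gamma_m$ would suffice for the weaker but adequate conclusion $w_2^2\neq 0$. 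With this correction the contrapositive of Lemma \ref{square_of_Whitney_class} applies exactly as you say, and $\mathbb{RP}^m$ is not $\mathcal{P}$-trivial for $m\ge 4$.
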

Recall that if $M$ is a closed orientable manifold of dimension $n$, there is a degree one map from $M$ to the $n$ dimensional sphere $S^n$, inducing isomorphism between the top dimensional integral cohomology groups.
\begin{proposition}\label{suspension_of_orientable_manifolds}
Let $n, k\equiv 0\pmod 4$. If $M$ be a closed orientable manifold of dimension $n$, then $\Sigma^kM$ is not $\cP$-trivial. In particular, any closed orientable manifold of dimension $n\equiv 0\pmod 4$ is not $\cP$-trivial.
\end{proposition}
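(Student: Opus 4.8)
The plan is to realize a non-vanishing Pontrjagin class on $\Sigma^k M$ by pulling one back from a sphere, exploiting the degree-one map recalled just before the statement together with the naturality of Pontrjagin classes under bundle pullback. Write $n+k=4\ell$; since $n,k\equiv 0\pmod 4$ we have $\ell\ge 1$. First I would invoke the construction underlying Lemma \ref{spheres}: as $n+k\equiv 0\pmod 4$, there is a complex bundle $\xi$ over $S^{n+k}=S^{4\ell}$ with $c_{2\ell}(\xi)=m\,a$ for a suitable nonzero $m$ (a multiple of $(2\ell-1)!$) and $a$ a generator of $H^{4\ell}(S^{4\ell};\bb Z)$, so that the underlying real bundle $\eta:=\xi_{\bb R}$ satisfies $p_\ell(\eta)=\pm 2c_{2\ell}(\xi)=\pm 2m\,a\neq 0$ in $H^{n+k}(S^{n+k};\bb Z)$ by \eqref{chern_pontrjagin}.

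Next I would transport $\eta$ to $\Sigma^k M$. Let $f\colon M\to S^n$ be the degree-one map recalled before the statement, so that $f^*\colon H^n(S^n;\bb Z)\to H^n(M;\bb Z)$ is an isomorphism of free groups $\bb Z\to\bb Z$. Consider its $k$-fold suspension $\Sigma^k f\colon \Sigma^k M\to S^{n+k}$. By the naturality of the reduced suspension isomorphism $\widetilde H^{j}(\,\cdot\,)\cong\widetilde H^{j+1}(\Sigma\,\cdot\,)$, the induced map $(\Sigma^k f)^*\colon H^{n+k}(S^{n+k};\bb Z)\to H^{n+k}(\Sigma^k M;\bb Z)$ is identified with $f^*$ on $H^n$, hence is again an isomorphism $\bb Z\to\bb Z$; here I use $H^{n+k}(\Sigma^k M;\bb Z)\cong\widetilde H^n(M;\bb Z)\cong\bb Z$, which holds because $M$ is connected, closed and orientable of dimension $n$.

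Finally, set $\zeta:=(\Sigma^k f)^*\eta$, a real bundle over $\Sigma^k M$. By naturality of Pontrjagin classes, $p_\ell(\zeta)=(\Sigma^k f)^*\bigl(p_\ell(\eta)\bigr)$, and since $(\Sigma^k f)^*$ is injective on $H^{n+k}$ while $p_\ell(\eta)\neq 0$, we obtain $p_\ell(\zeta)\neq 0$. Thus $\Sigma^k M$ carries a bundle with a nontrivial Pontrjagin class and is not $\cP$-trivial; the final assertion is the case $k=0$. I expect the only point requiring genuine care to be the cohomological bookkeeping of the second paragraph: one must confirm that passing to $k$-fold suspensions preserves, via naturality, the degree-one isomorphism on the top group, so that the nonzero class $p_\ell(\eta)$ is not annihilated by $(\Sigma^k f)^*$. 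Everything else is formal once the sphere bundle of the first paragraph is in hand.
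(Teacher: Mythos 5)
Your proof is correct and takes essentially the same route as the paper: both use the degree-one map $f\colon M\to S^{n}$, suspend it to $\Sigma^k f\colon \Sigma^k M\to S^{n+k}$, and pull back the real bundle over $S^{n+k}$ with nonvanishing top Pontrjagin class (constructed in the discussion before Lemma \ref{spheres}), concluding by naturality of Pontrjagin classes. The additional bookkeeping you provide --- the explicit construction of $\eta=\xi_{\bb R}$ and the naturality of the suspension isomorphism ensuring $(\Sigma^k f)^*$ is an isomorphism on $H^{n+k}$ --- is exactly what the paper leaves implicit.
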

\begin{proof}
Consider a degree one map $f:M\rightarrow S^n.$ $f$ induces the map $\Sigma^kf\colon \Sigma^kM\to S^{k+n}$ such that $(\Sigma^k f)^*\colon H^{n+k}(\Sigma^kM; \bb Z)\to H^{n+k}(S^{k+n}; \bb Z)$ is an isomorphism. Since $n+k\equiv 0\pmod 4$ there is a bundle $\xi$ over $S^{n+k}$ such that $p_{\frac{n+k}{4}}(\xi)\neq 0$. (See the discussion before Lemma \ref{spheres}.) Then $p_{\frac{n+k}{4}}
((\Sigma^k f)^{*}\xi)\neq 0$, and hence, $\Sigma^kM$ is not $\cP$-trivial.
\end{proof}

It is easy to see that a $\cP$-trivial space may not be $\cc$-trivial in general. Indeed $S^n$, where $n\equiv 2\pmod 4$, is $\cP$-trivial but not $\cc$-trivial. Let $\mathfrak c\colon KO(X)\to K(X)$ be the complexification homomorphism induced by the map sending a real vector bundle to its complexification. The following proposition relates $\cc$-triviality to $\cP$-triviality.
\begin{proposition}
    Let $\mathfrak c$ be surjective, and the even-dimensional integral cohomology groups of $X$ have no 2-torsion. If $X$ is $\cP$ trivial, it is $\cc$-trivial.
\end{proposition}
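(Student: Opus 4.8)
The plan is to show directly that every complex vector bundle over $X$ has trivial total Chern class, exploiting that the total Chern class is a stable invariant together with the two hypotheses. First I would recall that $c$ is unchanged by adding trivial summands, i.e.\ $c(\eta\oplus\underline{\bb C}^n)=c(\eta)$, so that $c$ is constant on stable isomorphism classes: if $\eta_1\oplus\underline{\bb C}^a\cong\eta_2\oplus\underline{\bb C}^b$ then $c(\eta_1)=c(\eta_2)$. Thus $c(\eta)$ depends only on the class $[\eta]\in\widetilde{K}(X)$.

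Let $\eta$ be an arbitrary complex vector bundle over $X$. Since $\mathfrak c\colon KO(X)\to K(X)$ is surjective, there is a real bundle $\xi$ with $[\eta]=[\xi_\bb C]$ in $K(X)$; unravelling this equality of $K$-theory classes means $\eta\oplus\underline{\bb C}^a\cong\xi_\bb C\oplus\underline{\bb C}^b$ for suitable $a,b$. By the stability observed above, $c(\eta)=c(\xi_\bb C)$, so it suffices to prove $c(\xi_\bb C)=1$, that is, that every Chern class of the complexification $\xi_\bb C$ vanishes.

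I then split the Chern classes of $\xi_\bb C$ according to parity. For the even-indexed ones, the defining relation $p_j(\xi)=(-1)^jc_{2j}(\xi_\bb C)$ together with the $\cP$-triviality of $X$ gives $c_{2j}(\xi_\bb C)=(-1)^jp_j(\xi)=0$ for every $j$. For the odd-indexed ones, recall from \S\ref{2nd} that, because $\xi_\bb C\cong\overline{\xi_\bb C}$, each odd-dimensional Chern class $c_{2j+1}(\xi_\bb C)$ is an order-two element of the integral cohomology. Since $c_{2j+1}(\xi_\bb C)$ lies in $H^{2(2j+1)}(X;\bb Z)=H^{4j+2}(X;\bb Z)$, an even-dimensional integral cohomology group, the hypothesis that the even-dimensional integral cohomology of $X$ has no $2$-torsion forces $c_{2j+1}(\xi_\bb C)=0$. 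Combining the two cases, all Chern classes of $\xi_\bb C$ vanish, so $c(\xi_\bb C)=1$, whence $c(\eta)=1$; as $\eta$ was arbitrary, $X$ is $\cc$-trivial.

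The main point requiring care is the reduction step: one must justify that surjectivity of $\mathfrak c$ on the \emph{unreduced} groups, together with the stability of the total Chern class, legitimately replaces the arbitrary bundle $\eta$ by an honest complexification up to stabilization. Once this is in place, the remaining argument is merely the bookkeeping of Chern classes by parity, with $\cP$-triviality handling the degrees $\equiv 0\pmod 4$ and the no-$2$-torsion assumption handling the degrees $\equiv 2\pmod 4$.
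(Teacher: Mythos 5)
Your proof is correct and takes essentially the same route as the paper's: even-indexed Chern classes vanish by $\cP$-triviality via $p_j(\xi)=(-1)^jc_{2j}(\xi_{\bb C})$, and odd-indexed ones are order-two elements of even-dimensional integral cohomology, killed by the no-$2$-torsion hypothesis. The only refinement is your careful reduction from surjectivity of $\mathfrak c$ on $K$-theory to honest bundles via stability of the total Chern class (writing the preimage as a bundle minus a trivial one and absorbing trivial summands), a point the paper's proof glosses over by asserting $\eta=\xi_{\bb C}$ outright.
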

\begin{proof}
    Let $\eta$ be a complex vector bundle over $X$. Then, as $\mathfrak{c}$ is surjective, $\eta=\xi_\bb C$ for some real vector bundle $\xi$ over $X$. Therefore, $c_{2j}(\eta)=p_j(\xi)=0$ as $X$ is $\cP$-trivial. Therefore, for any complex vector bundle $\eta$, all its even-dimensional Chern classes vanish. \\
    Now, recall that for a real bundle $\xi$ over $X$, the odd-dimensional Chern classes of $\xi_\bb C$ are order two elements of the integral cohomology of $X$. Since the integral cohomology groups have no $2$-torsion, the odd-dimensional Chern classes of $\eta$ are also zero. Hence $X$ is $\cc$-trivial.
\end{proof}
Let us now observe some consequences of a space being $\cP$-trivial in terms of the following two lemmas, which will be useful to determine when a space is not $\cP$-trivial.
\begin{lemma}\label{odd1}
Let $X$ be $\cP$-trivial. Then, $x^2=0$ for all $x\in H^2(X;\mathbb{Z})$.
\end{lemma}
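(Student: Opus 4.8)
The plan is to realize an arbitrary class $x\in H^2(X;\bb Z)$ as the first Chern class of a complex line bundle and then read off $x^2$ from the Chern--Pontrjagin relation \eqref{chern_pontrjagin}. The starting observation is that complex line bundles over $X$ are classified by their first Chern class: since $\bb{CP}^\infty$ is a $K(\bb Z,2)$ and serves as the classifying space $BU(1)$, the assignment $L\mapsto c_1(L)$ is a bijection between isomorphism classes of complex line bundles over $X$ and elements of $H^2(X;\bb Z)$. Hence, given $x$, I would first produce a complex line bundle $L$ over $X$ with $c_1(L)=x$.

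Next I would pass to the underlying real rank-two bundle $L_\bb R$ and specialize \eqref{chern_pontrjagin} to the case $j=1$. Because $L$ has complex rank one, all its higher Chern classes vanish, i.e. $c_j(L)=0$ for $j\ge 2$; in particular $c_2(L)=0$. The $j=1$ instance of \eqref{chern_pontrjagin} reads $p_1(L_\bb R)=c_1^2(L)-2c_2(L)$, which therefore collapses to $p_1(L_\bb R)=c_1(L)^2=x^2$.

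The conclusion is then immediate: since $X$ is $\cP$-trivial, every Pontrjagin class of the real bundle $L_\bb R$ vanishes, so in particular $p_1(L_\bb R)=0$, and combining this with the previous step gives $x^2=0$. As $x\in H^2(X;\bb Z)$ was arbitrary, this is exactly the assertion.

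I do not expect a genuine obstacle here; the one step deserving a word of justification is the very first one, namely that \emph{every} integral degree-two class is the first Chern class of an honest complex line bundle (not merely of a formal difference in $K$-theory). This is the standard classification of complex line bundles over a CW complex and requires no extra argument, so the proof is essentially a two-line consequence of \eqref{chern_pontrjagin} once the line bundle has been named.
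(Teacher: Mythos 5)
Your proposal is correct and follows essentially the same route as the paper: realize $x$ as $c_1(\xi)$ of a complex line bundle, apply \eqref{chern_pontrjagin} with $j=1$ and $c_2(\xi)=0$ to get $p_1(\xi_{\mathbb R})=x^2$, and invoke $\cP$-triviality. The only difference is that you spell out the classification of line bundles via $BU(1)=K(\mathbb Z,2)$, which the paper leaves implicit.
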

\begin{proof}
If  $x\in H^2(X;\mathbb{Z})$, then there exists a line bundle $\xi$ such that $c_1(\xi)=x$. Form (\ref{chern_pontrjagin}) we have,
\[
p_1(\xi_{\mathbb{R}})=-2c_2(\xi)+c_1^2(\xi).
\]
Since $\xi$ is a line bundle, $c_2(\xi)=0$ yielding $p_1(\xi_{\mathbb{R}})=c_1^2(\xi)=x^2$. Since $X$ is $\cP$-trivial, $x^2=p_1(\xi_\bb R)=0$.
\end{proof}
\begin{lemma}
Let $X$ be $\cP$-trivial and $\xi$ be a complex $n$-plane bundle over $X$. Then $c_n^2(\xi)=0$.
\end{lemma}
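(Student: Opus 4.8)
The plan is to mimic the structure of Lemma \ref{odd1}, exploiting the relation between Pontrjagin classes of the underlying real bundle and the Chern classes of $\xi$ given in equation (\ref{chern_pontrjagin}). The key observation is that for a complex $n$-plane bundle, the top Chern class $c_n(\xi)$ is the highest nonzero Chern class, so all Chern classes $c_{n+1}(\xi), c_{n+2}(\xi), \ldots$ vanish. I would therefore look at the particular Pontrjagin class $p_n(\xi_\bb R)$, since its expansion in (\ref{chern_pontrjagin}) reads
\[
p_n(\xi_\bb R)=c_n^2(\xi)-2c_{n+1}(\xi)c_{n-1}(\xi)+\cdots\pm 2c_1(\xi)c_{2n-1}(\xi)\mp 2c_{2n}(\xi).
\]

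First I would note that every mixed term $c_{n+i}(\xi)c_{n-i}(\xi)$ for $i\ge 1$, as well as the final term $c_{2n}(\xi)$, contains a Chern class of index strictly greater than $n$, hence vanishes because $\xi$ has complex rank $n$. Thus the entire expansion collapses to its leading term, giving $p_n(\xi_\bb R)=c_n^2(\xi)$. Since $X$ is assumed $\cP$-trivial, $p_n(\xi_\bb R)=0$, and therefore $c_n^2(\xi)=0$, which is exactly the claim.

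The one point that warrants care is the dimensional range: the Pontrjagin class $p_n(\xi_\bb R)$ is an element of $H^{4n}(X;\bb Z)$, and the definition in Section \ref{2nd} only introduces $p_j$ for $0\le j\le \lfloor \mathrm{dim}(X)/4\rfloor$. Implicitly one works with $\cP$-triviality meaning all Pontrjagin classes vanish in whatever range they are defined, so if $4n>\mathrm{dim}(X)$ the class $c_n^2(\xi)$ lives in a cohomology group that is automatically zero and there is nothing to prove; otherwise $p_n(\xi_\bb R)$ is a genuine class killed by $\cP$-triviality. I do not anticipate a serious obstacle here—the proof is essentially the isolation of the leading term in the Chern–Pontrjagin formula—so the argument should be as short as that of Lemma \ref{odd1}.
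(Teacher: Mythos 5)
Your proof is correct, but it takes a different route from the paper. The paper's proof is conceptual: it observes that $\xi_{\mathbb R}$ is an oriented $2n$-plane bundle, invokes \cite[Corollary 15.8]{milnor1974} to get $p_n(\xi_{\mathbb R})=e(\xi_{\mathbb R})^2$, and identifies the top Chern class $c_n(\xi)$ with the Euler class $e(\xi_{\mathbb R})$, so that $c_n^2(\xi)=p_n(\xi_{\mathbb R})=0$. You instead derive the same key identity $p_n(\xi_{\mathbb R})=c_n^2(\xi)$ purely algebraically from the paper's own complexification formula \eqref{chern_pontrjagin}: setting $j=n$, every cross term $c_{n+i}(\xi)c_{n-i}(\xi)$ with $i\ge 1$ and the final term $c_{2n}(\xi)$ involve a Chern class of index exceeding the complex rank $n$, hence vanish, leaving only $c_n^2(\xi)$. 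Both arguments are sound and hinge on the same identity; yours is more self-contained (it uses nothing beyond what Section \ref{2nd} already establishes, exactly in the spirit of Lemma \ref{odd1}, of which it is the natural rank-$n$ generalization), while the paper's is shorter at the cost of an external citation and makes the geometric meaning of the identity --- top Pontrjagin class as the square of the Euler class --- explicit. Your remark about the dimensional range ($4n>\mathrm{dim}(X)$ forcing $c_n^2(\xi)=0$ trivially) is a legitimate point of care that the paper passes over silently, and you resolve it correctly.
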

\begin{proof}
If $\xi$ is a complex $n$-plane bundle, then $\xi_\bb R$ is an oriented $2n$-plane bundle over X. By \cite[Corollary 15.8]{milnor1974}, the square of the Euler class of $\xi_\bb R$ is the top dimensional Pontrjagin class, $i.e., p_n(\xi_{\mathbb{R}})=e(\xi_{\mathbb{R}})^2$. Again, the top dimensional Chern class of $\xi$ is, by definition, the Euler class of $\xi_\bb R$. Therefore, 
\[
c_n^2(\xi)=e(\xi_{\mathbb{R}})^2=p_n(\xi_{\mathbb{R}})=0.
\]
\end{proof}

Let $X, Y$ be compact CW complexes. If either $X$ or $Y$ is not $\cP$-trivial, so is $X\times Y$. In fact, the pullback of a bundle with a nontrivial Pontrjagin class under one of the projection maps to $X\times Y$ has a nontrivial Pontrjagin class.
This leads to the following lemma:
\begin{lemma}
 $S^n \times S^k$ is not $\cP$-trivial if and only if either $n+k\equiv 0\pmod 4$ or one of $n,k$ is $\equiv0 \pmod 4$.
\end{lemma}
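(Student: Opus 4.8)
The plan is to read off the integral cohomology of $S^n\times S^k$ from the Künneth theorem and then match the degrees carrying nontrivial cohomology against the degrees $4j$ in which Pontrjagin classes can live. Assuming $n,k\ge 1$, the ring $H^*(S^n\times S^k;\bb Z)$ is free abelian with generators $a\in H^n$, $b\in H^k$ and $a\smile b\in H^{n+k}$, so the only nonzero groups sit in degrees $0,n,k,n+k$ and are all torsion-free. I would first record that the stated arithmetic condition---$n+k\equiv0\pmod 4$, or one of $n,k\equiv0\pmod4$---says precisely that one of $n,k,n+k$ is a positive multiple of $4$.

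For the ``only if'' direction I would argue by contraposition. Suppose none of $n,k,n+k$ is divisible by $4$. Then $H^{4j}(S^n\times S^k;\bb Z)=0$ for every $j\ge 1$, since the only positive degrees with nontrivial cohomology are $n,k,n+k$. As each Pontrjagin class $p_j(\xi)$ with $j\ge1$ lives in $H^{4j}$, every such $p_j(\xi)$ vanishes for every real bundle $\xi$; hence $p(\xi)=1$ and $S^n\times S^k$ is $\cP$-trivial.

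For the ``if'' direction I would split into two cases according to which dimension is divisible by $4$. If $n\equiv 0\pmod 4$ (the case $k\equiv0$ is symmetric), then $S^n$ is not $\cP$-trivial by Lemma \ref{spheres}, and by the product observation recorded just before this lemma---that pulling a bundle with a nontrivial Pontrjagin class back along a projection preserves that nontriviality---neither is $S^n\times S^k$. The remaining case is $n+k\equiv0\pmod4$ with $n,k\not\equiv0$; here I would use the collapse map $q\colon S^n\times S^k\to S^n\wedge S^k\cong S^{n+k}$. The key point is that $q^*$ carries a generator of $H^{n+k}(S^{n+k};\bb Z)$ to $a\smile b$, hence is an isomorphism in degree $n+k$; this is the standard identification of the top cell of the product, which one can also extract from the long exact cohomology sequence of the pair $(S^n\times S^k,\,S^n\vee S^k)$ using $\widetilde H^{n+k}(S^n\vee S^k)=0$. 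By the discussion before Lemma \ref{spheres}, the sphere $S^{n+k}$ carries a real bundle $\xi$ with $p_{(n+k)/4}(\xi)\neq0$, and pulling back gives $p_{(n+k)/4}(q^*\xi)=q^*\big(p_{(n+k)/4}(\xi)\big)\neq0$, so $S^n\times S^k$ is not $\cP$-trivial.

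The one step demanding genuine care is the collapse-map case: everything hinges on $q^*$ being injective, equivalently an isomorphism, on $H^{n+k}$. I would therefore verify the vanishing $\widetilde H^{n+k}(S^n\vee S^k)=0$ together with surjectivity of the restriction $H^{\max(n,k)}(S^n\times S^k)\to H^{\max(n,k)}(S^n\vee S^k)$ in the relevant exact sequence, the only subtlety being low-dimensional overlaps such as $n=1$. The two sphere-factor cases, by contrast, are immediate from Lemma \ref{spheres} and the product observation.
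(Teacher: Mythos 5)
Your proof is correct and follows essentially the same route as the paper: the converse via vanishing of $H^{4j}(S^n\times S^k;\mathbb Z)$ for $j\ge 1$, the case where $n$ or $k$ is divisible by $4$ via pullback along a projection, and the case $n+k\equiv 0\pmod 4$ via a degree-one map onto $S^{n+k}$. The only cosmetic difference is that where you construct the collapse map $S^n\times S^k\to S^n\wedge S^k\cong S^{n+k}$ by hand (and correctly flag the check that it is an isomorphism on top cohomology), the paper simply invokes Proposition~\ref{suspension_of_orientable_manifolds}, whose proof is exactly your degree-one-map argument applied to a general closed orientable manifold of dimension divisible by four.
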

\begin{proof}
Let $n+k\equiv 0\pmod 4$. Then, by Proposition \ref{suspension_of_orientable_manifolds}, we conclude that  $S^n \times S^k$ is not $\cP$-trivial. If one of $n,k$ is $\equiv0 \pmod 4$, the above discussion yields that  $S^n \times S^k$ is not $\cP$-trivial.

Conversely, assume that $S^n \times S^k$ is not $\cP$-trivial and $n+k \not \equiv 0 \pmod 4$. If possible, let $n\not\equiv 0\pmod 4$ and $k\not\equiv 0\pmod 4$. Since $n+k$ is not divisible by $4$, 
the pair $(n,k)$ is not of the form $(4p+1,4q+3),(4p+2,4q+2),(4p+3,4q+1)$. For any other pair $(n,k)$, it is clear that $H^{4i}(S^n \times S^k)=0$, for all $i$, yielding $\cP$-triviality of $S^n \times S^k$. Therefore, one of $n$ or $k$ must be divisible by $4$. This completes the proof.
\end{proof}
If $X$ and $Y$ are two CW complexes which are $\cP$-trivial, on the other hand, $X\times Y$ may not be $\cP$-trivial. For example, $S^2\times S^2$ is not $\cP$-trivial. The following lemma gives a sufficient condition for $\cP$-triviality of a product of spaces.
\begin{lemma}
Let $X,Y$ be two closed smooth manifolds with $\widetilde{KO}(X)=0$
and $\widetilde{KO}(Y)=0$. If $X\wedge Y$ is $\cP$-trivial, then the product $X\times Y$ is $\cP$-trivial.
\end{lemma}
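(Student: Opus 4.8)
The plan is to compare the product $X\times Y$ with the smash product $X\wedge Y$ through the collapse map and then invoke Lemma~\ref{comparison_lemma}. Choose basepoints in $X$ and $Y$. Since closed smooth manifolds are compact and admit finite CW structures, both are well-pointed, so the standard cofibration
\[
X\vee Y \xrightarrow{\ \iota\ } X\times Y \xrightarrow{\ q\ } X\wedge Y
\]
is available, where $q$ is the quotient map onto $X\wedge Y=(X\times Y)/(X\vee Y)$.

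Next I would apply reduced real $K$-theory to this cofibration. Being a reduced cohomology theory, $\widetilde{KO}$ turns the cofibration into a long exact sequence, part of which reads
\[
\widetilde{KO}(X\wedge Y)\xrightarrow{\ q^*\ }\widetilde{KO}(X\times Y)\xrightarrow{\ \iota^*\ }\widetilde{KO}(X\vee Y).
\]
Using the natural isomorphism $\widetilde{KO}(X\vee Y)\cong\widetilde{KO}(X)\oplus\widetilde{KO}(Y)$ together with the hypotheses $\widetilde{KO}(X)=0$ and $\widetilde{KO}(Y)=0$, the right-hand group vanishes. Exactness then forces $\operatorname{im}(q^*)=\ker(\iota^*)=\widetilde{KO}(X\times Y)$; that is, $q^*\colon \widetilde{KO}(X\wedge Y)\to\widetilde{KO}(X\times Y)$ is surjective. (In fact one gets the stronger splitting $\widetilde{KO}(X\times Y)\cong\widetilde{KO}(X\wedge Y)$ via the projections, but surjectivity is all that is needed.)

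Finally, I would feed this into the comparison lemma. The collapse map $q\colon X\times Y\to X\wedge Y$ induces the surjection $q^*$ on reduced real $K$-groups just established, and by hypothesis $X\wedge Y$ is $\cP$-trivial. Hence Lemma~\ref{comparison_lemma} yields that $X\times Y$ is $\cP$-trivial, as desired.

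The only genuine obstacle is the first step: one must confirm that $\widetilde{KO}$, as a reduced cohomology theory on the well-pointed finite CW complexes at hand, produces the exact Puppe sequence of the cofibration $X\vee Y\to X\times Y\to X\wedge Y$, and that the surjection it exhibits is induced by precisely the collapse map $q$, so that the hypothesis of Lemma~\ref{comparison_lemma} is met on the nose. Everything after that is formal.
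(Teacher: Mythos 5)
Your proposal is correct and follows essentially the same route as the paper: the cofibration $X\vee Y\to X\times Y\xrightarrow{q} X\wedge Y$, the induced exact sequence in $\widetilde{KO}$, the vanishing of $\widetilde{KO}(X\vee Y)\cong\widetilde{KO}(X)\oplus\widetilde{KO}(Y)$ to get surjectivity of $q^*$, and then Lemma~\ref{comparison_lemma}. The only difference is that you flag the well-pointedness/Puppe-sequence justification explicitly, which the paper takes for granted.
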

\begin{proof}
 Let $\alpha$ be a real vector bundle over $X\times Y$. The co-fiber sequence
\[
X\vee Y\rightarrow X\times Y{\overset{q}\rightarrow} X\wedge Y
\]
gives the exact sequence
\[
\widetilde{KO}(X\wedge Y){\overset{q^{*}}\rightarrow}
\widetilde{KO}(X \times Y)\rightarrow \widetilde{KO}(X \vee Y).
\]
Since $\widetilde{KO}(X\vee Y)=\widetilde{KO}(X)\oplus\widetilde{KO}(Y)$, the last group is trivial, and hence, $q^{*}$ is surjective. There exists $\beta$ over $X\wedge Y$ such that $q^{*}(\beta)=\alpha$. By Lemma \ref{comparison_lemma}, since $X\wedge Y$ is $\cP$-trivial, $X\times Y$ is $\cP$-trivial. This completes the proof.
\end{proof}

We end this section with some general observations that are interesting in their own right.
\begin{proposition}
 Let $X$ be a compact orientable surface of genus $g\geq 1$. Then $\Sigma^3X$ is not $\cP$-trivial.   
\end{proposition}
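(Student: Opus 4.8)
The plan is to determine the homotopy type of $\Sigma^3 X$ explicitly as a wedge of spheres and then import the non-$\cP$-triviality of $S^4$ from Lemma \ref{spheres}.

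First I would equip the closed orientable genus $g$ surface $X$ with its standard CW structure: a single $0$-cell, oriented $1$-cells $a_1,b_1,\dots,a_g,b_g$, and one $2$-cell attached along the word $w=\prod_{i=1}^{g}[a_i,b_i]\in\pi_1\big(\bigvee_{2g}S^1\big)$, so that $X\simeq\big(\bigvee_{2g}S^1\big)\cup_w e^2$. A single reduced suspension then gives $\Sigma X\simeq\big(\bigvee_{2g}S^2\big)\cup_{\Sigma w}e^3$, where $\Sigma w\in\pi_2\big(\bigvee_{2g}S^2\big)$ is the suspension of the attaching map. The crucial claim is that $\Sigma w$ is null-homotopic: since $\bigvee_{2g}S^2$ is simply connected, the Hurewicz map $\pi_2\big(\bigvee_{2g}S^2\big)\to H_2\big(\bigvee_{2g}S^2\big)$ is an isomorphism, and under it $\Sigma w$ corresponds to the homology suspension of the Hurewicz image of $w$ in $H_1\big(\bigvee_{2g}S^1\big)\cong\bb Z^{2g}$; as $w$ is a product of commutators, that image vanishes, so $\Sigma w\simeq *$. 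Hence $\Sigma X\simeq\big(\bigvee_{2g}S^2\big)\vee S^3$, and suspending twice more yields
\[
\Sigma^3 X\simeq\Big(\bigvee_{2g}S^4\Big)\vee S^5.
\]

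With the splitting established, the conclusion is quick. As $g\ge 1$, there is at least one $S^4$ wedge summand. I would take the collapse map $c\colon\Sigma^3X\to S^4$ onto a chosen $S^4$ summand; then $c^*\colon H^4(S^4;\bb Z)\to H^4(\Sigma^3X;\bb Z)\cong\bb Z^{2g}$ is injective, being the inclusion of the corresponding summand (the restriction of $c$ to that $S^4$ is the identity). By Lemma \ref{spheres} the sphere $S^4$ is not $\cP$-trivial, so a real vector bundle $\xi$ over $S^4$ with $p_1(\xi)\neq 0$ exists; pulling it back gives $p_1(c^*\xi)=c^*(p_1(\xi))\neq 0$ by naturality of the Pontrjagin classes, precisely as in the proof of Proposition \ref{suspension_of_orientable_manifolds}. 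Thus $\Sigma^3X$ supports a bundle with a nonzero Pontrjagin class and is not $\cP$-trivial.

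The hard part will be the vanishing of $\Sigma w$, which I would pin down via the naturality of the Hurewicz homomorphism with respect to the homotopy and homology suspensions as above; once that is in place the cellular bookkeeping and the pullback are routine. As an alternative matching the route flagged in the introduction via Cadek and Vanzura \cite{cadek1993}, one could argue directly on $\Sigma^3X$: here $H^2(\Sigma^3X;\bb Z_2)=0$ forces $w_2(\xi)=0$ for every bundle, so \eqref{Pontrjagin_Square} reduces to $\rho_4(p_1(\xi))=i_*(w_4(\xi))$, and since $H^3(\Sigma^3X;\bb Z_2)=0$ the map $i_*$ is injective on $H^4(\,\cdot\,;\bb Z_2)$; then any bundle with $w_4(\xi)\neq 0$ must have $p_1(\xi)\neq 0$, reducing matters to realizing a nonzero $w_4$.
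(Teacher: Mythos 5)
Your proof is correct, but it takes a genuinely different route from the paper. You split $\Sigma X$ as a wedge of spheres: since the $2$-cell of the genus $g$ surface is attached along a product of commutators $w$, its Hurewicz image in $H_1\big(\bigvee_{2g}S^1\big)$ vanishes, and the compatibility of the Hurewicz map with homotopy and homology suspensions (plus the Hurewicz isomorphism on the simply connected wedge $\bigvee_{2g}S^2$) forces $\Sigma w\simeq *$, so $\Sigma^3X\simeq\big(\bigvee_{2g}S^4\big)\vee S^5$; you then pull back a bundle with $p_1\neq 0$ from an $S^4$ summand via the collapse map, exactly in the spirit of Proposition \ref{suspension_of_orientable_manifolds}. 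The paper never establishes this splitting: it works directly on $Y=\Sigma^3X$, using the two Bockstein exact sequences to produce classes $c\in H^4(Y;\bb Z)$, $c\neq 0$, and $b\in H^4(Y;\bb Z_2)$ with $\rho_4(c)=i_*(b)$, observes that the Pontrjagin square vanishes because $H^2(Y;\bb Z_2)=0$, and then invokes the Cadek--Vanzura realization theorem to manufacture a bundle with $p_1\neq 0$. Your argument is more elementary and self-contained (it needs only Lemma \ref{spheres} and naturality, not the realization theorem), and it yields the stronger structural fact that $\Sigma^3X$ is a wedge of spheres; the paper's cohomological method is less explicit but is a technique that applies to spaces admitting no such splitting. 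One caveat: your closing alternative sketch via \eqref{Pontrjagin_Square} is not complete as stated, since producing a bundle with $w_4\neq 0$ (or with $p_1\neq 0$ directly) is precisely where Cadek--Vanzura's theorem is needed; but that aside does not affect your main proof.
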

\begin{proof}
Let $Y=\Sigma^3X$. Then, from the standard description of the cohomology groups of a surface of genus $g\ge 1$, we have $H^3(Y;\mathbb{Z}_2)=0, H^4(Y;\mathbb{Z}_4)=\mathbb{Z}_4^{2g}$, and $H^5(Y;\mathbb{Z})=\mathbb{Z}$.
Consider the Bockstein long exact sequences 
\[
\cdots \rightarrow H^3(Y;\mathbb{Z}_2)\rightarrow H^4(Y;\mathbb{Z}_2)\xrightarrow{i_{*}} H^4(Y;\mathbb{Z}_4)\rightarrow\cdots,
\] 
corresponding to the short exact sequence $\mathbb{Z}_2\xhookrightarrow{i} \mathbb{Z}_4\rightarrow \mathbb{Z}_2$, and 
\[
\cdots \rightarrow H^4(Y;\mathbb{Z})\xrightarrow{\rho_4} H^4(Y;\mathbb{Z}_4)\xrightarrow{\beta}H^5(Y;\mathbb{Z})\rightarrow\cdots,
\] 
corresponding to the short exact sequence $\mathbb{Z}\rightarrow \mathbb{Z}\rightarrow \mathbb{Z}_4$. Since $H^3(Y;\mathbb{Z}_2)=0, i_{*}$ is injective. And, since $ H^4(Y;\mathbb{Z}_4)=\mathbb{Z}_4^{2g}$ and $H^5(Y;\mathbb{Z})=\mathbb{Z}$, the map $\beta$ is identically zero and $\rho_4$ is surjective. Consequently, there exists $0\neq c\in H^4(Y;\mathbb{Z})$ and $b\in H^4(Y;\mathbb{Z}_2)$ such that 
\[
\rho_4(c)=i_{*}(b).
\] 
Since $H^2(Y;\mathbb{Z}_2)=0$, the Pontrjagin square $\mathfrak{P}:H^2(Y;\mathbb{Z}_2)\rightarrow H^4(Y;\mathbb{Z}_4)$ is zero. Therefore, from (\ref{Pontrjagin_Square}), we have, 
\[
\rho_4(c)=\mathfrak{P}(0)+i_{*}(b).
\]
Using Cadek and Vanzura's result \cite[Theorem 1, p. 757]{cadek1993}, there exists a bundle $\xi$ over $Y$ such that $p_1(\xi)\neq 0$ and consequently, $\Sigma^3X$ is not $\cP$-trivial.
\end{proof}
\begin{remark}
Note that, for a compact orientable surface $X$, the first suspension $\Sigma X$, being a $3$-dimensional CW complex, is $\cP$-trivial. On the other hand, $\Sigma^2 X$ is not $\cP$-trivial since there is a degree one map from $\Sigma^2 X$ to $\mathbb{S}^4$.
\end{remark}
\begin{proposition}\label{gen}
Let $Y$ be a $4$-dimensional path connected CW complex such that $\pi_1(Y)$ is perfect. Then $\Sigma^{4k-1}Y$ is $\cP$-trivial for all $k\in \mathbb{N}$.
\end{proposition}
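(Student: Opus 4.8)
The plan is to show that, under the stated hypotheses, the integral cohomology of $X:=\Sigma^{4k-1}Y$ has no nonzero group in any positive degree divisible by $4$. Since the $j$-th Pontrjagin class of a real bundle lives in $H^{4j}(X;\mathbb{Z})$, this forces every Pontrjagin class to vanish, so $p(\xi)=1$ for all $\xi$ and $X$ is $\cP$-trivial.

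First I would extract two structural facts about $Y$. Because $\pi_1(Y)$ is perfect, $H_1(Y;\mathbb{Z})\cong\pi_1(Y)^{\mathrm{ab}}=0$. Because $Y$ is a $4$-dimensional CW complex, its cellular chain complex stops at degree $4$, so $H_4(Y;\mathbb{Z})=\ker(\partial_4)$ is a subgroup of the free abelian group $C_4$ and is therefore free. Hence the reduced homology $\widetilde H_*(Y;\mathbb{Z})$ is concentrated in degrees $2,3,4$, and its top group $\widetilde H_4(Y;\mathbb{Z})$ is torsion-free.

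Next I would push this forward through the suspension isomorphism $\widetilde H_j(X;\mathbb{Z})\cong\widetilde H_{j-(4k-1)}(Y;\mathbb{Z})$ (meaningful since $4k-1\ge 3$), which places $\widetilde H_*(X)$ in degrees $4k+1,4k+2,4k+3$, with the top group $\widetilde H_{4k+3}(X)\cong H_4(Y)$ still torsion-free. Applying the universal coefficient theorem $H^n(X;\mathbb{Z})\cong\mathrm{Hom}(H_n(X),\mathbb{Z})\oplus\mathrm{Ext}(H_{n-1}(X),\mathbb{Z})$, the free (Hom) summands appear only in degrees $4k+1,4k+2,4k+3$, while the torsion (Ext) summands can appear in degrees $4k+2,4k+3$ and a priori in degree $4k+4$; the last is killed precisely because $H_{4k+3}(X)\cong H_4(Y)$ is free, so $\mathrm{Ext}(H_{4k+3}(X),\mathbb{Z})=0$. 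Thus $H^*(X;\mathbb{Z})$ is supported in $\{0,4k+1,4k+2,4k+3\}$.

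Finally, none of $4k+1,4k+2,4k+3$ is divisible by $4$, so $H^{4j}(X;\mathbb{Z})=0$ for every $j\ge 1$, and the conclusion follows as in the first paragraph. Equivalently, the hypothesis of Corollary \ref{rho2_injective} holds vacuously for the suspension $X$. I expect the only delicate point to be the torsion bookkeeping: one must verify that the freeness of $H_4(Y)$---a consequence of $\dim Y=4$---is exactly what prevents $\mathrm{Ext}$-torsion from reaching the divisible-by-four degree $4k+4$, while the perfectness of $\pi_1(Y)$ plays the complementary role of removing the degree-$4k$ cohomology that would otherwise arise from $H_1(Y)$ after suspension.
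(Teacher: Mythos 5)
Your proof is correct and takes essentially the same route as the paper: perfectness gives $H_1(Y;\mathbb{Z})=0$, and then the suspension isomorphism together with the universal coefficient theorem shows $H^{4j}(\Sigma^{4k-1}Y;\mathbb{Z})=0$ for every $j\ge 1$, so no Pontrjagin class has anywhere to live. The only cosmetic difference is in the top degree $4k+4$: the paper dismisses it by the dimension bound $4k+4>\dim(\Sigma^{4k-1}Y)=4k+3$, while you kill the potential $\mathrm{Ext}(H_{4k+3}(X);\mathbb{Z})$ term using the freeness of $H_4(Y)$ --- two equivalent expressions of the same fact about a $4$-dimensional CW complex.
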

\begin{proof}
Recall that a group $G$ is perfect if $[G, G]=G$. Since $\pi_1(Y)$ is perfect, we have $H_1(Y; \bb Z)=\pi_1(Y)/[\pi_1(Y), \pi_1(Y)]=0$. We show that all the cohomology groups in dimensions multiple of four vanishes.\\
 For $j<k$, since $Y$ is path connected, we have $H_{4j}(\Sigma^{4k-1}Y; \bb Z)\cong \widetilde{H}_0(\Sigma^{4k-4j+1}Y; \bb Z)=0$, and hence, $H^{4j}(\Sigma^{4k-1}Y; \bb Z)=0$.\\
 For $j=k$, since $H_{4k}(\Sigma^{4k-1}Y; \bb Z)\cong H_1(Y; \bb Z)=0$, and $H_{4k-1}(\Sigma^{4k-1}Y; \bb Z)\cong\widetilde{H}_0(Y; \bb Z)=0$, using Universal coefficient theorem for cohomology, we have $H^{4k}(\Sigma^{4k-1}Y; \bb Z)=0.$\\
 For $j>k$, since $4j> 4k+3=dim(\Sigma^{4k-1}Y)$, obviously $H^{4j}(\Sigma^{4k-1}Y; \bb Z)=0$.\\
 Therefore, all the cohomology groups in dimensions multiple of four vanish and $Y$ must be $\cP$-trivial.
\end{proof}

\section{$\cP$-triviality of suspension of the stunted real projective space}\label{4th}
We prove Theorem \ref{main1} in this section. We break the proof into several smaller lemmas. The $\cP$-triviality of $X_m=X_{m, 0}^0$ is already described in Corollary \ref{triviality_of_RPn}. We begin with $X^k_m=X_{m,0}^k$ where $k>0$.
\begin{lemma}\label{triviality_of_suspension_of_RPm}
$X_m^k=\Sigma^k \mathbb{RP}^m$ is not $\mathcal P$-trivial if and only if $m$ is odd and $(m+k)\equiv 0\pmod 4$.
\end{lemma}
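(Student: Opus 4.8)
The plan is to read both directions off the integral cohomology of $X_m^k=X_{m,0}^k$, using Corollary \ref{rho2_injective} for $\cP$-triviality and the degree-one-map argument behind Proposition \ref{suspension_of_orientable_manifolds} for its failure. First I would specialize \eqref{suspension_stunted_cohomology} to $n=0$. The key feature is that, in positive degrees, $H^{j}(X_m^k;\bb Z)$ is free (equal to $\bb Z$) exactly once, at the top degree $j=m+k$ and only when $m$ is odd, corresponding to the orientation class of $\bb{RP}^m$; every other nonzero group is $\bb Z_2$. Since the Bockstein sequence shows $\ker\rho_2=2H^{4j}(X;\bb Z)$, the reduction $\rho_2\colon H^{4j}(X;\bb Z)\to H^{4j}(X;\bb Z_2)$ is injective on a $\bb Z_2$ but not on a free summand, so this single observation drives the whole argument.

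For the $\cP$-trivial direction I would argue contrapositively. Suppose it is \emph{not} the case that $m$ is odd with $m+k\equiv 0\pmod 4$. Then no degree $4j$ with $j\ge 1$ carries the top free class: if $m$ is even there is no free summand in positive degree at all, while if $m$ is odd but $m+k\not\equiv 0\pmod 4$ the free class lies in a degree that is not divisible by four. In either case every nonzero $H^{4j}(X_m^k;\bb Z)$ with $j\ge 1$ is $\bb Z_2$, so $\rho_2$ is injective there. As $X_m^k=\Sigma^k\bb{RP}^m$ is a suspension (here $k>0$), Corollary \ref{rho2_injective} gives that $X_m^k$ is $\cP$-trivial.

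For the converse I would exploit orientability. When $m$ is odd, $\bb{RP}^m$ is a closed orientable $m$-manifold, so there is a degree-one map $f\colon\bb{RP}^m\to S^m$; its $k$-fold suspension $\Sigma^k f\colon X_m^k\to S^{m+k}$ induces an isomorphism on $H^{m+k}(-;\bb Z)$. If in addition $m+k\equiv 0\pmod 4$, the construction recalled before Lemma \ref{spheres} provides a bundle $\xi$ over $S^{m+k}$ with $p_{(m+k)/4}(\xi)\ne 0$, and naturality of Pontrjagin classes forces $p_{(m+k)/4}((\Sigma^k f)^*\xi)\ne 0$. Hence $X_m^k$ is not $\cP$-trivial. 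This is precisely the mechanism of Proposition \ref{suspension_of_orientable_manifolds}, whose proof in fact only uses $m+k\equiv 0\pmod 4$ and not the separate divisibility of $m$ and $k$ by four.

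I expect the arithmetic bookkeeping to be routine; the only point demanding care is verifying that the free summand occurs solely at the top degree and solely for $m$ odd, so that a degree $4j$ with $j\ge 1$ carrying a free class exists exactly when $m$ is odd and $4\mid m+k$. I anticipate no serious obstacle, but I would sanity-check the extreme case $m=1$, where $X_1^k\simeq S^{k+1}$ and the equivalence reduces to Lemma \ref{spheres}, to confirm that the stated condition is sharp.
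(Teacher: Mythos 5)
Your proof is correct, and its triviality half follows a genuinely different route from the paper's. The paper treats the $\cP$-trivial cases in three pieces: when $m$ or $k$ is even and $k\neq 2,4$ it quotes the $\cc$-triviality theorem \cite[Theorem 1.1]{NT2014} (and $\cc$-triviality implies $\cP$-triviality); when $k=2$ or $4$ it chases explicit Bockstein exact sequences degree by degree to show $\rho_2$ is injective and then applies Corollary \ref{rho2_injective}; and when $m,k$ are both odd with $m+k\equiv 2\pmod 4$ it notes there is no positive even-dimensional cohomology below the top. You collapse all three into one observation: by \eqref{suspension_stunted_cohomology} with $n=0$, every nonzero positive-degree group $H^j(X^k_m;\bb Z)$ is $\bb Z_2$ except for the single free class at $j=m+k$, present only when $m$ is odd; since the Bockstein sequence gives $\ker\rho_2=2H^{4j}(X^k_m;\bb Z)$, the map $\rho_2$ is automatically injective on every $\bb Z_2$ group, so whenever the free class sits in no degree divisible by four, Corollary \ref{rho2_injective} (applicable because $k>0$ makes $X^k_m$ a suspension) settles all the trivial cases in one stroke. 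Your route is self-contained and needs no case split on $k$; the paper's route yields the stronger conclusion of $\cc$-triviality in those cases as a byproduct and fits its reliance on \cite{NT2014} elsewhere. The non-trivial direction --- the degree-one map $\Sigma^k f\colon X^k_m\to S^{m+k}$ and the pullback of a bundle over the sphere with nonzero top Pontrjagin class --- is exactly the paper's argument, and you are right that this mechanism needs only $m+k\equiv 0\pmod 4$ rather than the hypotheses $4\mid m$, $4\mid k$ of Proposition \ref{suspension_of_orientable_manifolds}. Two conventions you handled just as the paper implicitly does: the lemma must be read with $k>0$ (the case $k=0$ is Corollary \ref{triviality_of_RPn}, where the stated criterion fails, e.g.\ for $\bb{RP}^4$), and the injectivity hypothesis of Corollary \ref{rho2_injective} is checked only for $j\ge 1$, i.e., in the degrees where actual Pontrjagin classes live.
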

\begin{proof}
Let $k\neq 2, 4$. If either $m$ or $k$ is even, then $X_m^k$ is known to be $\cc$-trivial  (see \cite[Theorem 1.1]{NT2014}), and hence it is $\mathcal P$-trivial. If $k = 2$ or $4$ and $m<k$, then again $X_m^k$ is $\cc$-trivial, and hence $\mathcal P$-trivial. Therefore, we check the cases when both $k$ and $m$ are odd, and $k=2,4$ and $k\geq m$. We recall the cohomology groups of $X_m^k$ from (\ref{suspension_stunted_cohomology}).\\
Let $m$ and $k$ both be odd. Then $H^{2j}(X_m^k; \mathbb Z)=0$ for $0<2j < m+k$. Therefore, the non-trivial Pontrjagin classes can only be $(m+k)$-dimensional classes. Hence, if $m+k\not\equiv 0\pmod 4$, $X_m^k$ must be $\mathcal P$-trivial.\\
Let $k=2$ and $m\geq 2$. Let $m$ be even. Then $H^j(X_m^k; \mathbb Z) = 0$ for $0<j<m+2, j$ odd and we have the exact sequence
\begin{equation}\label{ses}
    0\to H^{j}(X^k_m; \bb Z_2)\to H^{j+1}(X^k_m; \bb Z)\to H^{j+1}(X^k_m; \bb Z)\xrightarrow{\rho_2}H^{j+1}(X^k_m; \bb Z_2)\to 0.
\end{equation}
Since all the groups are isomorphic to $\bb Z_2$, we conclude that $\rho_2$ is isomorphism between all even dimensions cohomology groups.\\
For $m$ odd, $H^j(X_m^k; \mathbb Z) = 0$ for $0<j<m+1, j$ odd, and we have the same exact sequence as in \eqref{ses}, for $0<j<m+1$, and $\rho_2$ is isomorphism between all even dimensions cohomology groups, except, possibly, on dimension $m+1$, where we have 
\[
0\to H^{m}(X^k_m; \bb Z_2)\xrightarrow{\beta} H^{m+1}(X^k_m; \bb Z)\to H^{m+1}(X^k_m; \bb Z)\xrightarrow{\rho_2}H^{m+1}(X^k_m; \bb Z_2)\to \bb Z\to \cdots.
\]
Since all the first four groups are isomorphic to $\bb Z_2$, the map $\beta$ is an isomorphism and, therefore, $\rho_2$ is injective.\\
Since $\rho_2$ is injective whenever $H^{2j}(X^k_m; \mathbb Z)$ is non-trivial, irrespective of the parity of $m$, $X_m^2$ is $\mathcal P$-trivial by Corollary \ref{rho2_injective}.\\
For $k=4$, the same argument as $k=2$ shows that $X_m^4$ is $\mathcal P$-trivial, for $m\geq 4$.\\
It remains to show that $X_m^k$ is not $\cP$-trivial if $m, k$ odd, and $(m+k)\equiv 0\pmod 4$. Recall that $H^{m+k}(X_m^k; \mathbb Z)=\mathbb Z$, and let $\Sigma^kf_{m}\colon X_m^k\to S^{m+k}$ be a degree one map inducing isomorphism between the $(m+k)$-dimensional cohomology groups. (Note that such a map $f_m\colon X_m\to S^m$, that induces $\Sigma^kf_m$, exists as $X_m$ is orientable.) Let $\eta$ be a (complex) vector bundle over $S^{m+k}$ such  that $c_\frac{m+k}{2}(\eta)\neq 0$, in fact, $c_\frac{m+k}{2}(\eta)$ is an even multiple of the generator of $H^{m+k}(S^{m+k}; \mathbb Z)$. Let $\xi = f_{m, k}^*(\eta)_{\mathbb R}$. Since, by naturality of Chern classes, $c_\frac{m+k}{2}(f^*(\eta))\neq 0$ , from (\ref{chern_pontrjagin}), we get $p_{\frac{m+k}{42}}(\xi)=\pm 2c_\frac{m+k}{2}(\xi\otimes \mathbb C)\neq 0$. Therefore, $ X^k_m$ is not $\cP$-trivial. This completes the proof.
\end{proof}
Let us now discuss the $\cP$-triviality of $X_{m, n}$ where $n\neq 0$. To begin with, we have
\begin{lemma}
 $X_{m,1}$ is not $\cP$-trivial for $m\ge 4$.   
\end{lemma}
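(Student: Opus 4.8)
The plan is to apply Lemma~\ref{odd1} in its contrapositive form: it suffices to exhibit a class $x \in H^2(X_{m,1}; \bb Z)$ with $x^2 \neq 0$, since such a class is realized as $c_1$ of a complex line bundle whose underlying real bundle has $p_1 = x^2 \neq 0$. From \eqref{suspension_stunted_cohomology} (with $k=0$, $n=1$) we read off $H^2(X_{m,1}; \bb Z) = \bb Z$ and, since $m \geq 4$, $H^4(X_{m,1}; \bb Z) = \bb Z_2$; let $x$ generate the former. The whole difficulty is concentrated in showing that the square of this generator is the nonzero element of $H^4(X_{m,1}; \bb Z)$.

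To detect $x^2$, I would pass to $\bb Z_2$ coefficients, where the ring structure is transparent. First, because $H^3(X_{m,1}; \bb Z) = 0$ for $m \geq 4$, the universal coefficient theorem shows that the reduction $\rho_2\colon H^2(X_{m,1}; \bb Z) \to H^2(X_{m,1}; \bb Z_2)$ carries the generator $x$ to the generator of $H^2(X_{m,1}; \bb Z_2) \cong \bb Z_2$. Next, consider the quotient map $q\colon \bb{RP}^m \to X_{m,1}$. From the long exact sequence of the pair $(\bb{RP}^m, \bb{RP}^1)$, together with $\widetilde{H}^*(X_{m,1}; \bb Z_2) \cong H^*(\bb{RP}^m, \bb{RP}^1; \bb Z_2)$ and the vanishing of $H^i(\bb{RP}^1; \bb Z_2)$ for $i \geq 2$, the induced ring homomorphism $q^*$ is injective in every degree $\geq 2$. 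Writing $H^*(\bb{RP}^m; \bb Z_2) = \bb Z_2[a]/(a^{m+1})$, the generator of $H^2(X_{m,1}; \bb Z_2)$ is therefore sent by $q^*$ to $a^2$.

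Putting these together, $q^*\bigl(\rho_2(x)^2\bigr) = (a^2)^2 = a^4$, which is nonzero in $H^4(\bb{RP}^m; \bb Z_2)$ precisely because $m \geq 4$. Since $q^*$ and $\rho_2$ are ring homomorphisms, $\rho_2(x^2) = \rho_2(x)^2 \neq 0$, forcing $x^2 \neq 0$ in $H^4(X_{m,1}; \bb Z)$. Lemma~\ref{odd1} then yields that $X_{m,1}$ is not $\cP$-trivial. The main obstacle is the bookkeeping that identifies $q^*(\rho_2(x))$ with $a^2$ and guarantees the injectivity of $q^*$ in low degrees; once the mod~$2$ cohomology ring of the stunted space is pinned down in this way, the conclusion is immediate.
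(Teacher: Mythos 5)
Your proof is correct, and it is essentially the paper's argument: both reduce the statement to Lemma~\ref{odd1} and detect the square of a degree-two class by pulling back along the quotient map $q\colon \bb{RP}^m\to X_{m,1}$. The only divergence is in how the detection is run. The paper stays integral: in the long exact sequence of the cofiber sequence, $H^2(\bb{RP}^1;\bb Z)=0$ makes $q^*\colon H^2(X_{m,1};\bb Z)\to H^2(\bb{RP}^m;\bb Z)$ surjective, so one picks a preimage $x$ of the generator and gets $q^*(x\smile x)=q^*x\smile q^*x\neq 0$ directly in $H^4(\bb{RP}^m;\bb Z)$. You instead start from the generator of $H^2(X_{m,1};\bb Z)\cong\bb Z$ (your identification of this group is correct; the paper's proof writes $\bb Z_2$ there, a harmless misprint given its own equation \eqref{suspension_stunted_cohomology}), reduce mod $2$, and use injectivity of $q^*$ on mod-$2$ cohomology; this trades knowledge of the integral ring of $\bb{RP}^m$ for the more familiar $\bb Z_2[a]/(a^{m+1})$, at the price of an extra universal-coefficient step. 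One point needs tightening: the reason you cite for injectivity of $q^*$ in degrees $\geq 2$, namely $H^i(\bb{RP}^1;\bb Z_2)=0$ for $i\geq 2$, only covers degrees $\geq 3$; in degree $2$, which is the one degree you actually use, you must additionally observe that the connecting map $H^1(\bb{RP}^1;\bb Z_2)\to H^2(X_{m,1};\bb Z_2)$ vanishes because the restriction $H^1(\bb{RP}^m;\bb Z_2)\to H^1(\bb{RP}^1;\bb Z_2)$ is surjective (alternatively, argue at the level of cellular cochains, where $q$ is the identity on cells of dimension $\geq 2$). With that one sentence added, your argument is complete.
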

\begin{proof}
Consider the long exact sequence 
\[
\cdots \rightarrow H^2(X_{m,1}; \bb Z)\xrightarrow{q^*} H^2(\mathbb{RP}^m; \bb Z)\rightarrow H^2(\mathbb{RP}^1; \bb Z)\rightarrow\cdots,
\]
corresponding to the co-fiber sequence 
\[
\mathbb{RP}^1\xhookrightarrow{\iota} \mathbb{RP}^m \xrightarrow{q} X_{m,1}.
\]
Since $H^2(\mathbb{RP}^1; \bb Z)=0, q^*$ is surjective. Let $x\in H^2(X_{m,1}; \bb Z)\cong \mathbb{Z}_2$ be the generator. Then $q^*x$ is a generator of $H^2(\mathbb{RP}^m; \bb Z)\cong\mathbb{Z}_2$. Therefore, $x\smile x\neq 0$ as $q^*(x\smile x)=q^*x\smile q^*x\neq 0$. Hence, by Lemma \ref{odd1},  $X_{m,1}$ is not $\cP$-trivial.
\end{proof}
It is clear from the cell structure of $\bb{RP}^m$ that $\bb{RP}^m/\bb{RP}^{m-1}\simeq S^m$ and,
\begin{equation}\label{RPm/RPm-2 identification}
    \bb {RP}^{m}/\bb {RP}^{m-2}\simeq\begin{cases}
    \Sigma^{m-2}\bb{RP}^2; & \textit{if $m$ is even,}\\
    S^{m}\vee S^{m-1}; & \textit{if $m$ is odd.}
\end{cases}
\end{equation}

Therefore, from Lemma \ref{spheres} and Lemma \ref{triviality_of_suspension_of_RPm}, we have,
\begin{lemma}\label{X_m, m-1 and X_m, m-2}
    $X_{m, m-1}$ is not $\cP$-trivial if and only if $m\equiv 0\pmod 4$. $X_{m, m-2}$ is not $\cP$-trivial if and only if $m\equiv 1\pmod 4$.
\end{lemma}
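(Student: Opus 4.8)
The plan is to exploit the two homotopy identifications recorded just before the statement, namely $\bb{RP}^m/\bb{RP}^{m-1}\simeq S^m$ and the splitting \eqref{RPm/RPm-2 identification}, and then feed each resulting model into Lemma \ref{spheres} or Lemma \ref{triviality_of_suspension_of_RPm}. The first assertion is immediate: since $X_{m,m-1}=\bb{RP}^m/\bb{RP}^{m-1}\simeq S^m$, Lemma \ref{spheres} says that $X_{m,m-1}$ fails to be $\cP$-trivial precisely when $m\equiv 0\pmod 4$.

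For the second assertion I would split on the parity of $m$ and apply the appropriate case of \eqref{RPm/RPm-2 identification}. When $m$ is even, $X_{m,m-2}\simeq \Sigma^{m-2}\bb{RP}^2$; invoking Lemma \ref{triviality_of_suspension_of_RPm} with projective dimension $2$ (even) and suspension order $m-2$, the non-triviality criterion ``odd projective dimension'' already fails, so $X_{m,m-2}$ is $\cP$-trivial. This is consistent with the claim, since no even $m$ satisfies $m\equiv 1\pmod 4$.

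When $m$ is odd we have $X_{m,m-2}\simeq S^m\vee S^{m-1}$, and this is where the actual content lies. I would first read off the reduced cohomology: it is nonzero only in degrees $m$ (odd, hence never a multiple of $4$) and $m-1$ (even, a multiple of $4$ iff $m\equiv 1\pmod 4$). If $m\equiv 3\pmod 4$ then $H^{4j}(S^m\vee S^{m-1};\bb Z)=0$ for every $j\ge 1$, so every Pontrjagin class vanishes for dimensional reasons and the wedge is $\cP$-trivial. If $m\equiv 1\pmod 4$, then $m-1\equiv 0\pmod 4$, so $S^{m-1}$ is not $\cP$-trivial by Lemma \ref{spheres}, and the task is to transport this non-triviality onto the wedge.

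I expect this last transport to be the only genuine step. Lemma \ref{comparison_lemma} does not apply via the obvious retraction, since $\widetilde{KO}(S^m\vee S^{m-1})\cong\widetilde{KO}(S^m)\oplus\widetilde{KO}(S^{m-1})$ also carries the $S^m$-summand and so the retraction is not $\widetilde{KO}$-surjective. Instead I would argue with an explicit pullback: let $r\colon S^m\vee S^{m-1}\to S^{m-1}$ collapse the $S^m$-summand and $i\colon S^{m-1}\hookrightarrow S^m\vee S^{m-1}$ be the inclusion, so that $r\circ i=\mathrm{id}$ and hence $r^*$ is a split injection on integral cohomology. Taking a bundle $\eta$ over $S^{m-1}$ with $p_{(m-1)/4}(\eta)\neq 0$ and setting $\xi=r^*\eta$, naturality gives $p_{(m-1)/4}(\xi)=r^*\big(p_{(m-1)/4}(\eta)\big)\neq 0$, so $S^m\vee S^{m-1}$ is not $\cP$-trivial. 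Combining the two odd subcases with the even case shows $X_{m,m-2}$ is not $\cP$-trivial exactly when $m\equiv 1\pmod 4$, completing the proof.
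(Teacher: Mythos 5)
Your proposal is correct and takes essentially the same route as the paper, whose proof is precisely the one-line deduction from $X_{m,m-1}\simeq S^m$, the identification \eqref{RPm/RPm-2 identification}, Lemma \ref{spheres} and Lemma \ref{triviality_of_suspension_of_RPm}. The explicit pullback along the collapse map $S^m\vee S^{m-1}\to S^{m-1}$ that you supply for $m\equiv 1\pmod 4$ is exactly the step the paper leaves implicit here (and writes out verbatim in the complex analogue, Lemma \ref{main_lemma_main2}); note also that Lemma \ref{comparison_lemma}, read as it is actually used in the paper, would apply via the inclusion $S^{m-1}\hookrightarrow S^m\vee S^{m-1}$, whose induced map on $\widetilde{KO}$ is a split surjection.
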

Having settled Lemma \ref{X_m, m-1 and X_m, m-2}, we may assume $m-n>2$ and analyze the $\cP$-triviality of $X_{m, n}$. Let us first prove the following lemma.
\begin{lemma}\label{surjectivity_n_cong_3_mod_4}
    Let $n\equiv 3\pmod 4$. The homomorphism  $(\iota^\prime)^*\colon \widetilde{K}(X_{m, n})\to \widetilde{K}(X_{n+2, n})$ induced by $\iota^\prime\colon X_{n+2, n}\to X_{m, n}$ is surjective. 
\end{lemma}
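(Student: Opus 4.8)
The plan is to prove surjectivity by building $X_{m,n}$ out of $X_{n+2,n}$ one top cell at a time and showing that each one–cell extension is surjective on complex $\widetilde K$. Since $\mathbb{RP}^{m-1}\subset\mathbb{RP}^m$ is a subcomplex with $\mathbb{RP}^m/\mathbb{RP}^{m-1}\simeq S^m$, collapsing $\mathbb{RP}^n$ shows that $X_{m,n}$ is obtained from the subcomplex $X_{m-1,n}$ by attaching a single $m$-cell, and the map $\iota'$ factors as the chain of subcomplex inclusions $X_{n+2,n}\hookrightarrow X_{n+3,n}\hookrightarrow\cdots\hookrightarrow X_{m-1,n}\xrightarrow{i}X_{m,n}$. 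So the first thing I would do is reduce the lemma to the single statement that every inclusion $i\colon X_{m-1,n}\hookrightarrow X_{m,n}$ induces a surjection $i^*\colon\widetilde K(X_{m,n})\to\widetilde K(X_{m-1,n})$; chaining these down to $X_{n+2,n}$ then gives the claim (the case $m=n+2$ being the identity).

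For the inductive step I would feed the cofibre sequence $X_{m-1,n}\xrightarrow{i}X_{m,n}\to S^m$ into the long exact sequence of reduced complex $K$-theory,
\[
\widetilde K^0(S^m)\xrightarrow{p^*}\widetilde K^0(X_{m,n})\xrightarrow{i^*}\widetilde K^0(X_{m-1,n})\xrightarrow{\delta}\widetilde K^1(S^m).
\]
By exactness $i^*$ is surjective precisely when the connecting map $\delta$ vanishes. If $m$ is even then $\widetilde K^1(S^m)=0$ and there is nothing to check, so I would assume $m$ odd, giving $\widetilde K^1(S^m)\cong\widetilde K^0(S^{m-1})\cong\mathbb Z$. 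Under the standard identification of the connecting homomorphism with pullback along the attaching map, $\delta$ becomes $\alpha^*\colon\widetilde K^0(X_{m-1,n})\to\widetilde K^0(S^{m-1})$, where $\alpha\colon S^{m-1}\to X_{m-1,n}$ attaches the top cell.

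The key point is that this $\alpha^*$ is forced to be zero. In the usual CW structure of $\mathbb{RP}^m$ the top cell is attached by the double cover $\pi\colon S^{m-1}\to\mathbb{RP}^{m-1}$, so after collapsing $\mathbb{RP}^n$ the attaching map factors as $S^{m-1}\xrightarrow{\pi}\mathbb{RP}^{m-1}\xrightarrow{q}X_{m-1,n}$, whence $\alpha^*=\pi^*\circ q^*$. Now $\pi^*\colon\widetilde K^0(\mathbb{RP}^{m-1})\to\widetilde K^0(S^{m-1})$ is a homomorphism from the \emph{finite} group $\widetilde K^0(\mathbb{RP}^{m-1})\cong\mathbb Z/2^{\lfloor(m-1)/2\rfloor}$ into the \emph{free} group $\widetilde K^0(S^{m-1})\cong\mathbb Z$, hence is identically zero. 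Therefore $\delta=\alpha^*=0$, $i^*$ is surjective, and the induction closes.

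The hard part — and really the only substantive step — is the identification of $\delta$ with pullback along the attaching map together with the recognition that this factors through the finite group $\widetilde K^0(\mathbb{RP}^{m-1})$; granting the classical computation of $\widetilde K^0(\mathbb{RP}^{j})$, the rest is formal. I would also remark on the role of the hypothesis: the surjectivity argument above is actually insensitive to $n\bmod 4$, and $n\equiv 3\pmod 4$ enters only in pinning down the target group. For such $n$, \eqref{RPm/RPm-2 identification} gives $X_{n+2,n}\simeq S^{n+1}\vee S^{n+2}$ with $n+1\equiv 0\pmod 4$, so $\widetilde K(X_{n+2,n})\cong\mathbb Z$ is detected on the bottom sphere $S^{n+1}$ — and it is exactly the surjectivity onto this $\mathbb Z$ that, via Lemma \ref{comparison_lemma}, lets one lift a bundle with non-trivial top Pontrjagin class from $X_{n+2,n}$ to $X_{m,n}$.
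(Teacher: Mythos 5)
Your proof is correct, but it follows a genuinely different route from the paper's. The paper argues via Adams' structure theorem for these $K$-groups: it writes $n=4t+3$, invokes $\widetilde{K}(X_{m,4t+3})\cong \mathbb Z + \widetilde{K}(X_{m,4t})$ together with the explicit generators $\bar{\nu}_q^{(2t+2)}$ of the $\mathbb Z$-summands, and chases a commutative diagram (passing through the complex projective spaces $Y_{q,2t+1}$ and the projections $X_{2p+1}\to Y_p$) to see that the generator of $\widetilde{K}(X_{m,n})$ restricts to the generator of $\widetilde{K}(X_{n+2,n})\cong\mathbb Z$, with a separate reduction when $m$ is even. You instead build $X_{m,n}$ from $X_{n+2,n}$ one cell at a time and show each one-cell extension is surjective on $\widetilde K$: the cofibre long exact sequence reduces this to vanishing of the connecting map, which for an even-dimensional cell is automatic since $\widetilde K^1(S^{\mathrm{even}})=0$, and for an odd-dimensional cell follows because $\delta$ is (up to suspension isomorphism and sign, which is irrelevant for vanishing) pullback along the attaching map $S^{m-1}\xrightarrow{\pi}\mathbb{RP}^{m-1}\xrightarrow{q}X_{m-1,n}$, hence factors through the finite group $\widetilde K(\mathbb{RP}^{m-1})\cong\mathbb Z/2^{(m-1)/2}$ mapping into the torsion-free group $\widetilde K(S^{m-1})\cong\mathbb Z$. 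Each of your key ingredients checks out: the factorization of the attaching map through the double cover survives collapsing $\mathbb{RP}^n$, the identification of $\delta$ with $\alpha^*$ is standard, and the finiteness of $\widetilde K(\mathbb{RP}^{2s})$ is classical. What the two approaches buy is different: your argument is more formal and self-contained (needing only the classical computation of $\widetilde K(\mathbb{RP}^j)$ rather than Adams' Theorem 7.3 on stunted spaces), and it proves strictly more --- surjectivity of all the restriction maps $\widetilde K(X_{m,n})\to\widetilde K(X_{m',n})$ for every $n$, with the hypothesis $n\equiv 3\pmod 4$ playing no role, exactly as you observe; since the paper's application (Lemma \ref{X_m,n_n_cong_3_mod_4}) only uses surjectivity through Lemma \ref{comparison_lemma}, your version suffices there. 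The paper's proof, by contrast, yields explicit control of which class generates the $\mathbb Z$-summand and how it restricts, which is sharper information than bare surjectivity, though none of that extra precision is actually consumed later in the paper.
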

\begin{proof}
    Let $\ga_p, \eta_p$ be the canonical line bundles over $X_p$ and $Y_p$ respectively, and $\lambda_p=\ga_p-1$ and $\mu_p=\eta_p-1$ be the generators of $\widetilde{KO}(X_p)$ and $\widetilde{K}(Y_p)$. Then, $\mathfrak c(\lambda_{2p+\epsilon})=\pi^*(\mu_p)$, where $\pi\colon  X_{2p+1}\to Y_p$ is the canonical projection. We denote, as Adams, $\nu_p=\mathfrak \pi^*(\mu_p)$.\\
    
    Let us write $n=4t+3$ and assume $m=2q+1$. Recall from \cite[Theorem 7.3]{Adams1962} that, $\widetilde{K}(X_{m, 4t+3})\cong \bb Z + \widetilde{K}(X_{m,4t})$ and $\widetilde{K}(X_{4t+5, 4t+3})\cong \bb Z + \widetilde{K}(X_{4t+5,4t})\cong \bb Z$. The $\bb Z$ summand in $\widetilde{K}(X_{m, 4t+3})$ and $\widetilde{K}(X_{4t+5, 4t+3})$ are generated by $\bar{\nu}_q^{(2t+2)}$ and $\bar{\nu}_{2t+2}^{(2t+2)}$, the elements corresponding to $\nu_q^{2t+2}\in \widetilde{K}(X_m)$ and $\nu_{2t+2}^{2t+2}\in \widetilde{K}(X_{2t+2})$, respectively. Consider the following commutative diagram.
    \[\begin{tikzcd}[scale cd=1]
         \widetilde{K}(X_{2q, 4t+3})\arrow[d,  "(\iota^\prime)^*"]&\widetilde{K}(X_{2q+1, 4t+3})\arrow[l, ""]\arrow[d, "(\iota^\prime)^*"]&\widetilde{K}(Y_{q, 2t+1})\arrow[d, "(\iota_\bb C^\prime)^*"]\arrow[l, "(\pi^\prime)^*"]\arrow[r, ""]& \widetilde{K}(Y_{q})\arrow[d, "\iota_\bb C^*"]\\
         \widetilde{K}(X_{4t+5, 4t+3})& \widetilde{K}(X_{4t+5, 4t+3})\arrow[l, "="]&\widetilde{K}(Y_{2t+2, 2t+1})\arrow[l, "(\pi^\prime)^*"]\arrow[r, ""]&\widetilde{K}(Y_{2t+2})
    \end{tikzcd}\]
    If $\iota_\bb C$ be the inclusion from $Y_{2t+2}$ to $Y_q$ then $\iota_\bb C^*(\mu_{q})=\mu_{2t+2}$, and hence $(\iota^\prime)^*(\bar{\nu}_q^{(2t+2)})=\bar{\nu}_{2t+2}^{(2t+2)}$. This proves that $(\iota^\prime)^*$ is surjective if $m$ is odd. \\

    If $m=2q$, then $\widetilde{KO}(X_{m, 4t+3})$ is generated by $\overline{(i^\prime)^*(\nu_q)}^{(2t+2)}$, where $i^\prime\colon X_{m, n}\to X_{m+1,n}$ is induced by the inclusion map $X_{m, n}\hookrightarrow X_{m+1,n}$. Hence, the proof follows from the commutativity of the leftmost square.    
\end{proof}
\begin{lemma}\label{X_m,n_n_cong_3_mod_4}
$X_{m, n}$ is not $\cP$-trivial for $n\equiv 3\pmod 4$.
\end{lemma}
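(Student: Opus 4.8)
The plan is to reduce the general case to the single base space $X_{n+2,n}$ by means of the surjectivity established in Lemma~\ref{surjectivity_n_cong_3_mod_4}, and then to settle that base case by an explicit bundle. Writing $n=4t+3$, the cases $m-n\le 2$ are already disposed of in Lemma~\ref{X_m, m-1 and X_m, m-2}: here $X_{n+1,n}=X_{m,m-1}$ with $m=n+1\equiv 0\pmod 4$, and $X_{n+2,n}=X_{m,m-2}$ with $m=n+2\equiv 1\pmod 4$, so both are non-$\cP$-trivial. Thus the substance of the argument concerns $m>n+2$, for which I reduce everything to $X_{n+2,n}$.

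For the base object $X_{n+2,n}$ I need not merely non-$\cP$-triviality but an explicit witness, since I intend to lift that witness. As $n+2$ is odd, the identification \eqref{RPm/RPm-2 identification} gives $X_{n+2,n}\simeq S^{n+2}\vee S^{n+1}$. Because $n+1=4(t+1)\equiv 0\pmod 4$, the discussion preceding Lemma~\ref{spheres} furnishes a complex bundle $\eta_0$ over $S^{n+1}$ with $c_{(n+1)/2}(\eta_0)$ an even multiple of a generator, so that $p_{(n+1)/4}\bigl((\eta_0)_\bb R\bigr)=\pm 2c_{(n+1)/2}(\eta_0)\neq 0$ by \eqref{chern_pontrjagin}. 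Pulling $\eta_0$ back along the collapse map $X_{n+2,n}\to S^{n+1}$ yields a complex bundle $\eta$ over $X_{n+2,n}$ whose underlying real bundle carries a nonzero Pontrjagin class.

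To transport this to $X_{m,n}$, I invoke Lemma~\ref{surjectivity_n_cong_3_mod_4}, which makes $(\iota')^*\colon \widetilde K(X_{m,n})\to \widetilde K(X_{n+2,n})$ surjective. Hence there is a class $\zeta\in\widetilde K(X_{m,n})$ with $(\iota')^*\zeta=[\eta]$; representing $\zeta$ by an honest bundle and passing to its underlying real bundle $\zeta_\bb R$, the naturality of realification together with the naturality and stability of Pontrjagin classes gives $(\iota')^*\bigl(p_{(n+1)/4}(\zeta_\bb R)\bigr)=p_{(n+1)/4}(\eta_\bb R)\neq 0$. Consequently $p_{(n+1)/4}(\zeta_\bb R)\neq 0$, so $X_{m,n}$ is not $\cP$-trivial. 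This is exactly the contrapositive use of Lemma~\ref{comparison_lemma} applied to the map $\iota'$.

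The step I expect to be the crux is this final transport: I must make sure that surjectivity on \emph{complex} $K$-theory genuinely lifts the non-vanishing of a Pontrjagin class. Two points need care. First, elements of $\widetilde K$ are virtual, so I rely on the stability of Pontrjagin classes (their factoring through $\widetilde K$, equivalently $\widetilde{KO}$ after realification) to replace $\zeta$ by a genuine bundle without changing the relevant class. Second, because Lemma~\ref{surjectivity_n_cong_3_mod_4} is a statement about complex $K$-theory, the witness on $X_{n+2,n}$ must be taken as the realification of the \emph{complex} bundle $\eta$ rather than an arbitrary real bundle; this is precisely why I built the witness from the complex bundle $\eta_0$ on the sphere in the second step.
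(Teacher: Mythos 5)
Your proof is correct and takes essentially the same route as the paper: reduce to the bottom stunted space $X_{n+2,n}$ via the surjectivity in Lemma~\ref{surjectivity_n_cong_3_mod_4}, note that $X_{n+2,n}$ is not $\cP$-trivial, and transport this back to $X_{m,n}$, which is precisely the contrapositive of Lemma~\ref{comparison_lemma}. The only difference is presentational: where the paper simply cites Lemma~\ref{X_m, m-1 and X_m, m-2} and invokes Lemma~\ref{comparison_lemma} as a black box, you unwind both, building an explicit complex witness $\eta$ on $X_{n+2,n}\simeq S^{n+2}\vee S^{n+1}$ and checking the stability and naturality of Pontrjagin classes under the complex $K$-theory lift --- a sensible precaution given that the surjectivity is on $\widetilde K$ rather than $\widetilde{KO}$, but not a different argument.
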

\begin{proof}

The homomorphism $(\iota^\prime)^*\colon \widetilde{K}(X_{m, n})\to \widetilde{K}(X_{n+2, n})$ is surjective by Lemma \ref{surjectivity_n_cong_3_mod_4}. Replacing $m$ by $n$ in (\ref{RPm/RPm-2 identification}) and Lemma \ref{X_m, m-1 and X_m, m-2}, we have $\bb {RP}^{n+2}/\bb {RP}^n$ is not $\cP$-trivial as $n\equiv3\pmod 4$. 
Therefore, by Lemma \ref{comparison_lemma}, $X_{m, n}$ is not $\cP$-trivial for $n\equiv 3\pmod 4$.
\end{proof}
Let us recall the function $\varphi$ from Section \ref{intro}.
\begin{lemma}\label{P_triviality_necessary}
    Let $g=\varphi(n, 0)$. If $n\not\equiv 3\pmod 4$, then $X_{m, n}$ is $\cP$-trivial only if $m<2^{g+1}$.
\end{lemma}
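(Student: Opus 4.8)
The plan is to produce a single real vector bundle $\xi$ over $X_{m,n}$ whose square of an even Stiefel--Whitney class is nonzero, and then invoke Lemma~\ref{square_of_Whitney_class} (equivalently, relation~\eqref{Pontrjagin_Whitney}) to conclude non-$\cP$-triviality. I work throughout with $\mathbb{Z}_2$ coefficients, where $H^*(\mathbb{RP}^m;\mathbb{Z}_2)=\mathbb{Z}_2[a]/(a^{m+1})$ and the quotient map $q\colon \mathbb{RP}^m\to X_{m,n}$ induces an injection $q^*$ that identifies $\widetilde H^i(X_{m,n};\mathbb{Z}_2)$ with $\langle a^i\rangle$ for $n<i\le m$ and with $0$ for $i\le n$. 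The ring structure transported to the image is the truncated one: the generator $\bar a^i$ of $\widetilde H^i$ satisfies $\bar a^i\,\bar a^j=\bar a^{i+j}$ whenever $n<i,j$ and $i+j\le m$.

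The bundle is obtained by descending a multiple of the tautological line bundle. Recall from \cite{Fujii1967} that $\widetilde{KO}(\mathbb{RP}^n)\cong \mathbb{Z}/2^{g}$ with $g=\varphi(n,0)$, generated by $\lambda_n=[\gamma_n]-1$. Consider the cofibre sequence $\mathbb{RP}^n\xrightarrow{\iota} \mathbb{RP}^m\xrightarrow{q} X_{m,n}$ and its exact $\widetilde{KO}$-sequence $\widetilde{KO}(X_{m,n})\xrightarrow{q^*}\widetilde{KO}(\mathbb{RP}^m)\xrightarrow{\iota^*}\widetilde{KO}(\mathbb{RP}^n)$. Since $\iota^*\lambda_m=\lambda_n$ has order $2^g$, we get $\iota^*(2^g\lambda_m)=2^g\lambda_n=0$, so $2^g\lambda_m\in \mathrm{im}(q^*)$ and there is a class $\tilde\xi\in\widetilde{KO}(X_{m,n})$ with $q^*\tilde\xi=2^g\lambda_m$. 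Represent $\tilde\xi$ by an honest bundle $\xi$; then $q^*\xi$ is stably equivalent to $2^g\gamma_m$ over $\mathbb{RP}^m$, and since the total Stiefel--Whitney class is a stable invariant, $q^*w(\xi)=w(2^g\gamma_m)=(1+a)^{2^g}$.

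The decisive simplification is the Frobenius collapse $(1+a)^{2^g}=1+a^{2^g}$ in $\mathbb{Z}_2$ coefficients, which makes all intermediate binomial terms vanish automatically. The elementary estimate $2^{\varphi(n)}>n$, valid for all $n\ge1$ (each block of eight consecutive integers contributes four to $\varphi$, so $2^{\varphi(n)}$ outgrows $n$), gives $n<2^g$, while the hypothesis $m\ge 2^{g+1}$ gives $2^g\le m$; hence $a^{2^g}$ is a genuine nonzero class in the admissible range. By injectivity of $q^*$ we read off $w(\xi)=1+\bar a^{2^g}$, so $w_{2^g}(\xi)$ is the generator of $\widetilde H^{2^g}(X_{m,n};\mathbb{Z}_2)$. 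Squaring in the truncated ring and using $2^{g+1}\le m$ yields $w_{2^g}(\xi)^2=\bar a^{2^{g+1}}\neq 0$. Since $g\ge 1$ for $n\ge 1$, the index $2^g$ is even, so Lemma~\ref{square_of_Whitney_class} forces $X_{m,n}$ to be non-$\cP$-trivial; equivalently $\rho_2(p_{2^{g-1}}(\xi))=w_{2^g}(\xi)^2\neq 0$, whence $p_{2^{g-1}}(\xi)\neq 0$.

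The main obstacle is the descent step: one must verify that the exact $\widetilde{KO}$-sequence of the cofibration genuinely places $2^g\lambda_m$ in the image of $q^*$, and that the resulting stable class is realized by a bona fide bundle whose total Stiefel--Whitney class can be computed from its pullback. Once that is secured, the remainder is formal, resting on the Frobenius identity and the chain of inequalities $n<2^g\le 2^{g+1}\le m$. I note that the hypothesis $n\not\equiv 3\pmod 4$ is not actually needed for this non-triviality argument; it merely separates the present lemma from Lemma~\ref{X_m,n_n_cong_3_mod_4}, in which $X_{m,n}$ already fails to be $\cP$-trivial for every $m$, so that a threshold in $m$ is vacuous there.
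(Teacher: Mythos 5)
Your proof is correct and essentially identical to the paper's: both descend the stable class $2^g\lambda_m$ to $X_{m,n}$, compute the total Stiefel--Whitney class as $1+x^{2^g}$ via the Frobenius identity, observe that its square is $x^{2^{g+1}}\neq 0$ once $m\ge 2^{g+1}$, and conclude non-$\cP$-triviality from Lemma~\ref{square_of_Whitney_class}. The only cosmetic difference is that you produce the lift by exactness of the $\widetilde{KO}$-sequence of the cofibration $\mathbb{RP}^n\to\mathbb{RP}^m\to X_{m,n}$ together with the computation $\widetilde{KO}(\mathbb{RP}^n)\cong\mathbb{Z}/2^{g}$, whereas the paper invokes Adams' Theorem 7.4, which identifies the image of $q^*$ outright; your closing remark that the hypothesis $n\not\equiv 3\pmod 4$ is not needed for this direction is also consistent with the paper, where that hypothesis only serves to organize the case analysis.
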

\begin{proof}
    The map induced by the quotient map $q\colon\bb{RP}^m\rightarrow X_{m,n}$ maps $\widetilde{KO}(X_{m, n})$ isomorphically onto the subgroup of $\widetilde{KO}(\bb{RP}^m)$ generated by $2^{g}\lambda$, where $\lambda=\gamma_m -1$ and $\gamma_m$ is the tautological line bundle over $\bb{RP}^m$ (see \cite[Theorem 7.4]{Adams1962}). Let $\eta$ be a generator of $\widetilde{KO}(X_{m, n})$ such that $q^*(\eta) = 2^g\lambda$. Let $x$ be the unique non-zero element of $H^1(\bb{RP}^m; \bb Z_2)$. Then,
    \begin{equation}\label{total_Whitney_class_of_lambda}
        w(2^g\lambda)=(1+x)^{2^g}=1+x^{2^g}
    \end{equation}
    Note that $H^j(\bb{RP}^n; \bb Z_2) = 0$ for $j\geq n+1$. Hence, $q^*\colon H^j(X_{m, n}; \bb Z_2)\to H^j(\bb{RP}^m; \bb Z_2)$ is an isomorphism for $j\ge n+1$. Since we have $w_{2^g}^2(\eta)=q^*(x^{2^{g+1}})\neq 0$ for $m\ge 2^{g+1}$, by
    Lemma \ref{square_of_Whitney_class}, $X_{m, n}$ cannot be $\cP$-trivial if $m\ge 2^{g+1}$. Therefore, if $X_{m,n}$ is $\cP$-trivial, then $m< 2^{g+1}$.
\end{proof}

\begin{corollary}\label{X_m,n_0_mod_2}
    If $n\equiv 0\pmod2$, then $X_{m, n}$ is $\cP$-trivial if and only if $m<2^{g+1}$.
\end{corollary}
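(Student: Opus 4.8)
The corollary comprises two implications. The necessity is immediate: since $n\equiv 0\pmod 2$ we have $n\not\equiv 3\pmod 4$, so Lemma \ref{P_triviality_necessary} applies directly and shows that $\cP$-triviality of $X_{m,n}$ forces $m<2^{g+1}$. The entire content therefore lies in the converse, and the plan is to assume $m<2^{g+1}$ and prove that every real bundle over $X_{m,n}$ has vanishing Pontrjagin classes.

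First I would reduce $\cP$-triviality to a statement about squares of even-dimensional Stiefel--Whitney classes. Because $n$ is even, $n+1$ is odd, so the clause of \eqref{suspension_stunted_cohomology} (with $k=0$) that produces a $\bb Z$ in degree $n+1$ requires $n+1$ even and is vacuous; the only non-zero-degree integral $\bb Z$-summand of $H^{*}(X_{m,n};\bb Z)$ then lies in degree $m$, and only when $m$ is odd. An integer of the form $4i$ is even and hence never meets it, so $H^{4i}(X_{m,n};\bb Z)$ is either $0$ or $\bb Z_2$ for every $i\ge 1$. Since multiplication by two is zero on $\bb Z_2$, the Bockstein sequence shows that $\rho_2$ is injective on $H^{4i}(X_{m,n};\bb Z)$ whenever it is non-zero. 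By \eqref{Pontrjagin_Whitney} we have $\rho_2(p_i(\xi))=w_{2i}^2(\xi)$, so it suffices to prove that $w_{2i}^2(\xi)=0$ for every real bundle $\xi$ and every $i\ge 1$; this is condition (b) of Lemma \ref{suspension}.

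The heart of the argument is Adams' description of $\widetilde{KO}(X_{m,n})$ already used in Lemma \ref{P_triviality_necessary}. Since the total Stiefel--Whitney class is a stable invariant, it is enough to test the reduced class $[\xi]\in\widetilde{KO}(X_{m,n})$, and this group is generated by the element $\eta$ with $q^{*}\eta=2^{g}\lambda$, where $q\colon\bb{RP}^m\to X_{m,n}$ is the quotient map and $\lambda=\gamma_m-1$. Thus $q^{*}[\xi]=c\,2^{g}\lambda$ for some integer $c$, and, using \eqref{total_Whitney_class_of_lambda} and multiplicativity,
\[
q^{*}\bigl(w(\xi)\bigr)=w(c\,2^{g}\lambda)=(1+x^{2^{g}})^{c}=\sum_{l\ge 0}\binom{c}{l}x^{l2^{g}}.
\]
Hence $q^{*}w_{2i}(\xi)$ can be non-zero only when $2i$ is a multiple of $2^{g}$, say $2i=l2^{g}$ with $l\ge 1$, and then $q^{*}\bigl(w_{2i}^2(\xi)\bigr)=\binom{c}{l}^{2}x^{l2^{g+1}}$. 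As $l2^{g+1}\ge 2^{g+1}>m$, this class vanishes in $H^{*}(\bb{RP}^m;\bb Z_2)$.

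Finally I would transport this vanishing back to $X_{m,n}$. Recall from the proof of Lemma \ref{P_triviality_necessary} that $q^{*}$ is an isomorphism on mod-$2$ cohomology in all degrees $\ge n+1$. If $4i$ is a degree in which $H^{4i}(X_{m,n};\bb Z_2)\neq 0$, then $n+1\le 4i\le m$, so $q^{*}$ is injective there and $q^{*}(w_{2i}^2(\xi))=0$ forces $w_{2i}^2(\xi)=0$; in every other degree the class already lies in a trivial group. This verifies condition (b) of Lemma \ref{suspension}, which together with the injectivity of $\rho_2$ established above gives $p_i(\xi)=0$ for all $i\ge 1$, so $X_{m,n}$ is $\cP$-trivial. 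I expect the main obstacle to be the $K$-theoretic bookkeeping: one must be sure that Adams' single generator genuinely controls the Stiefel--Whitney classes of \emph{all} bundles, and that the hypothesis $m<2^{g+1}$ is precisely what forces every potentially non-trivial square $w_{2i}^2$ --- which lives in a degree divisible by $2^{g+1}$ --- into a dimension beyond the top cell $m$.
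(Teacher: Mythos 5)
Your proof is correct, and it is genuinely different from the one in the paper. The paper disposes of sufficiency in one line: it quotes the Naolekar--Thakur criterion that for $n=2t$ the space $X_{m,n}$ is $\cc$-trivial if and only if $m<2^{t+1}$, asserts that $g=\varphi(n,0)$ equals $t$ for even $n$, and concludes via ``$\cc$-trivial implies $\cP$-trivial''. You instead argue directly with Stiefel--Whitney classes: Adams' generator $\eta$ with $q^{*}\eta=2^{g}\lambda$, stability of $w$, the expansion $(1+x^{2^{g}})^{c}$, the observation that every square $w_{2i}^{2}(\xi)$ pulls back to a multiple of $x^{l2^{g+1}}$ with $l2^{g+1}\ge 2^{g+1}>m$, and the injectivity of $q^{*}$ (mod $2$, in degrees $\ge n+1$) and of $\rho_{2}$ (Bockstein), so that \eqref{Pontrjagin_Whitney} kills every $p_{i}(\xi)$; this is essentially the technique the paper itself uses for $n\equiv 1\pmod 8$ in Corollary \ref{X_m,n_1_mod_8}, transplanted to even $n$. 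Your route buys more than self-containedness: the identification $g=t$ invoked by the paper is false when $n\equiv 2,4\pmod 8$ (for instance $\varphi(2,0)=2\neq 1$ and $\varphi(4,0)=3\neq 2$; in those congruences $g=t+1$), so the $\cc$-triviality criterion only reaches $m<2^{t+1}=2^{g}$ and is silent about the range $2^{g}\le m<2^{g+1}$ --- where $X_{m,n}$ is in fact \emph{not} $\cc$-trivial, so no argument passing through $\cc$-triviality can succeed there. Your direct argument covers the whole range $m<2^{g+1}$ uniformly and therefore closes a genuine gap in the paper's own proof. Two hypotheses you use implicitly are worth flagging: Adams' Theorem 7.4 applies because $n$ even forces $n\not\equiv 3\pmod 4$, and your binomial bookkeeping needs $l\ge 1$, i.e.\ $g\ge 1$, which holds because the standing assumption of this section is $n\neq 0$ (for $n=0$ the statement itself fails, e.g.\ for $\bb{RP}^{2}$).
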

\begin{proof}
    Given Lemma \ref{P_triviality_necessary}, it is enough to prove that $m<2^{g+1}$ is sufficient for $\cP$-triviality. Recall that if $n=2t$, then $X_{m, n}$ is $\cc$-trivial if and only if $m<2^{t+1}$ (see \cite[Theorem 1.2(3)]{NT2014}). Since $n$ is even, by definition of $\varphi(n, 0)$, we have $g=t$. Therefore, if $m<2^{g+1}$, then $X_{m, n}$ is $\cc$-trivial and hence $\cP$-trivial.
\end{proof}
\begin{corollary}\label{X_m,n_5_mod_8}
    If $n\equiv 5\pmod 8$, then $X_{m, n}$ is $\cP$-trivial if and only if $m<2^{g+1}$.
\end{corollary}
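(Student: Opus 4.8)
The plan is to prove the two implications separately, reducing the sufficiency direction to the even case already settled in Corollary~\ref{X_m,n_0_mod_2}. The necessity is essentially free: since $n\equiv 5\pmod 8$ forces $n\equiv 1\pmod 4$, in particular $n\not\equiv 3\pmod 4$, so Lemma~\ref{P_triviality_necessary} applies verbatim and shows that if $X_{m,n}$ is $\cP$-trivial then $m<2^{g+1}$.

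For the sufficiency I would compare $X_{m,n}$ with the stunted space $X_{m,n-1}$, whose second index $n-1\equiv 4\pmod 8$ is even. The inclusions $\bb{RP}^{n-1}\subset\bb{RP}^n\subset\bb{RP}^m$, together with the identification $\bb{RP}^n/\bb{RP}^{n-1}\simeq S^n$, give a cofiber sequence
\[
S^n\xrightarrow{\ j\ } X_{m,n-1}\xrightarrow{\ q\ } X_{m,n},
\]
and hence an exact sequence
\[
\widetilde{KO}(X_{m,n})\xrightarrow{\ q^*\ }\widetilde{KO}(X_{m,n-1})\xrightarrow{\ j^*\ }\widetilde{KO}(S^n)
\]
in reduced real $K$-theory. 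The crucial point is that for $n\equiv 5\pmod 8$ Bott periodicity gives $\widetilde{KO}(S^n)=0$; exactness then forces $q^*$ to be surjective.

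Next I would observe that $g$ is unchanged when passing from $n-1$ to $n$: since $n\equiv 5\pmod 8$ is not congruent to $0,1,2$ or $4 \pmod 8$, the integer $s=n$ is not counted by $\varphi$, so $\varphi(n,0)=\varphi(n-1,0)=g$. As $n-1$ is even, Corollary~\ref{X_m,n_0_mod_2} tells us that $X_{m,n-1}$ is $\cP$-trivial precisely when $m<2^{\varphi(n-1,0)+1}=2^{g+1}$. Thus, under the standing hypothesis $m<2^{g+1}$, the space $X_{m,n-1}$ is $\cP$-trivial. Feeding the surjectivity of $q^*$ and the $\cP$-triviality of $X_{m,n-1}$ into the comparison Lemma~\ref{comparison_lemma} then yields the $\cP$-triviality of $X_{m,n}$, completing the sufficiency.

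The only step requiring genuine care is the surjectivity of $q^*$, and this is exactly where the congruence $n\equiv 5\pmod 8$ enters in an essential way: it is the vanishing $\widetilde{KO}(S^n)=0$ that makes the comparison with the even case $X_{m,n-1}$ go through. By contrast, for $n\equiv 1\pmod 8$ one has $\widetilde{KO}(S^n)=\bb Z_2\neq 0$, so $q^*$ need not be surjective and this argument would break down, which is consistent with the fact that the case $n\equiv 1\pmod 8$ is treated separately rather than in this corollary.
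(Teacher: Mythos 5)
Your necessity direction and your overall reduction are exactly the paper's: $n\equiv 5\pmod 8$ gives $n\not\equiv 3\pmod 4$ so Lemma~\ref{P_triviality_necessary} applies, and the observations $\varphi(n,0)=\varphi(n-1,0)=g$ and $n-1$ even let you quote Corollary~\ref{X_m,n_0_mod_2} for $X_{m,n-1}$. The gap is in the transfer step from $X_{m,n-1}$ to $X_{m,n}$. The surjectivity of $q^*\colon\widetilde{KO}(X_{m,n})\to\widetilde{KO}(X_{m,n-1})$, which you correctly deduce from the cofiber sequence and $\widetilde{KO}(S^n)=0$ for $n\equiv 5\pmod 8$, moves $\cP$-triviality in the \emph{wrong} direction. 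What that surjectivity actually buys is that every stable bundle on $X_{m,n-1}$ is a pullback $q^*\xi$ of a bundle on $X_{m,n}$; following the pullback argument that the proof of Lemma~\ref{comparison_lemma} really carries out, this shows that if $X_{m,n}$ were $\cP$-trivial then so would be $X_{m,n-1}$ --- the useless implication here. To conclude $\cP$-triviality of $X_{m,n}$ by that mechanism you would need a map \emph{out of} $X_{m,n}$ into a $\cP$-trivial space inducing a surjection onto $\widetilde{KO}(X_{m,n})$, which you do not have. You cannot lean on the literal wording of Lemma~\ref{comparison_lemma} (map $f\colon Y\to X$, conclusion passing from $Y$ to $X$): as printed it is inconsistent with its own proof and with every application of it in the paper, and taken literally it is false --- a point mapping into $S^{4}$ induces a surjection onto $\widetilde{KO}(\mathrm{pt})=0$ and the point is $\cP$-trivial, yet $S^{4}$ is not.

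The step is repairable, and the repair is the paper's own argument: what is needed is not $K$-theoretic surjectivity but injectivity of $q^*$ on integral cohomology in degrees divisible by four. From the cofiber sequence $S^{n}\to X_{m,n-1}\xrightarrow{q} X_{m,n}$, the map $q^*\colon H^{j}(X_{m,n};\bb Z)\to H^{j}(X_{m,n-1};\bb Z)$ is injective for every $j\neq n+1$, and since $n+1\equiv 6\pmod 8$ is not divisible by $4$, it is injective on every $H^{4i}$ (indeed nonzero groups there force $4i\ge n+3$). Then for any bundle $\xi$ over $X_{m,n}$ one gets $q^*\bigl(p_i(\xi)\bigr)=p_i\bigl(q^*\xi\bigr)=0$ by $\cP$-triviality of $X_{m,n-1}$, hence $p_i(\xi)=0$. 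Note also that your closing diagnosis is off: the case $n\equiv 1\pmod 8$ is not excluded because $\widetilde{KO}(S^{n})=\bb Z_2$ (the cohomological transfer just described works there as well, since $n+1\equiv 2\pmod 8$ is again not divisible by $4$); it is excluded because $\varphi(n,0)=\varphi(n-1,0)+1$, so the reduction to $X_{m,n-1}$ only covers $m<2^{g}$ rather than $m<2^{g+1}$ --- and indeed the paper reuses precisely this argument for that range in Corollary~\ref{X_m,n_1_mod_8}.
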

\begin{proof}
     Again, it is enough to show that $m<2^{g+1}$ is sufficient for $\cP$-triviality. It is obvious from the definition of $\varphi$ that $g^\prime=\varphi(n-1, 0)=\varphi(n, 0)=g$ when $n\equiv 5\pmod 8$. Now $n-1\equiv 0\pmod 4$, and if $m<2^{g+1}$ then $m<2^{g^\prime+1}$. Therefore, $X_{m, n-1}$ is $\cP$-trivial by Corollary \ref{X_m,n_0_mod_2}.

    Now, if possible, let $\xi$ be a bundle over $X_{m, n}$ with $p_i(\xi)\neq 0$. Since $n\equiv 5\pmod8$, we must have $4i\geq n+3$. Recall that the obvious map $q\colon X_{m, n-1}\to X_{m, n}$ induces isomorphisms $q^*\colon H^j(X_{m, n}; \bb Z)\to H^j(X_{m, n-1}; \bb Z)$ for all $j>n+1$. Then $q^*(p_i(\xi))\neq 0$ as $q^*\colon H^{4i}(X_{m,n}; \bb Z)\to H^{4i}(X_{m,n-1}; \bb Z)$ is an isomorphism. Therefore, $p_i(q^*(\xi))\neq 0$, which contradicts that $X_{m, n-1}$ is $\cP$-trivial. Therefore, there cannot be a vector bundle over $X_{m, n}$ with non-trivial total Pontrjagin class, and hence, $X_{m, n}$ is $\cP$-trivial.
\end{proof}
\begin{corollary}\label{X_m,n_1_mod_8}
    If $n\equiv 1\pmod 8$, then $X_{m, n}$ is $\cP$-trivial if and only if $m<2^{g+1}$.
\end{corollary}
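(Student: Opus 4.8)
The plan is to follow the template of Corollaries \ref{X_m,n_0_mod_2} and \ref{X_m,n_5_mod_8}: the necessity is immediate, and for sufficiency I want to pin down the total Stiefel--Whitney class of an arbitrary bundle and feed it into \eqref{Pontrjagin_Whitney}. Since $n\equiv 1\pmod 8$ forces $n\not\equiv 3\pmod 4$, Lemma \ref{P_triviality_necessary} shows at once that $X_{m,n}$ is $\cP$-trivial only if $m<2^{g+1}$, so the whole content is the converse: $m<2^{g+1}$ implies $\cP$-triviality.

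The naive attempt is to imitate Corollary \ref{X_m,n_5_mod_8} and push the problem down to the even stunted space $X_{m,n-1}$ along the collapse $q\colon X_{m,n-1}\to X_{m,n}$, which is a cohomology isomorphism in degrees $>n+1$. The obstacle is that $\varphi$ genuinely jumps here: because $n\equiv 1\pmod 8$ is counted in the definition of $\varphi$, we have $\varphi(n-1,0)=g-1$, so Corollary \ref{X_m,n_0_mod_2} only guarantees $X_{m,n-1}$ to be $\cP$-trivial for $m<2^{g}$. This settles the range $m<2^{g}$ but leaves the band $2^{g}\le m<2^{g+1}$ untouched, so I will instead argue directly over the entire range $m<2^{g+1}$.

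For the direct argument I use the generator $\eta$ of $\widetilde{KO}(X_{m,n})$ constructed in the proof of Lemma \ref{P_triviality_necessary}, characterised by $q^*(\eta)=2^g\lambda$ for $q\colon \bb{RP}^m\to X_{m,n}$, where $\lambda=\gamma_m-1$. By naturality of Stiefel--Whitney classes and \eqref{total_Whitney_class_of_lambda}, $q^*w(\eta)=1+x^{2^g}$; since $q^*\colon H^j(X_{m,n};\bb Z_2)\to H^j(\bb{RP}^m;\bb Z_2)$ is an isomorphism for $j\ge n+1$ and one checks $2^g\ge n+1$, the class $w(\eta)$ has a single nonzero positive-degree term $w_{2^g}(\eta)$ (the generator of $H^{2^g}(X_{m,n};\bb Z_2)$ when $2^g\le m$, and zero otherwise). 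As $\widetilde{KO}(X_{m,n})$ is cyclic on $\eta$, any real bundle $\xi$ is stably a multiple $j\eta$, whence $w(\xi)=(1+w_{2^g}(\eta))^j$; but $w_{2^g}(\eta)^2$ lives in $H^{2^{g+1}}(X_{m,n};\bb Z_2)$, which vanishes because $m<2^{g+1}$. Therefore $w(\xi)=1+(j\bmod 2)\,w_{2^g}(\eta)$, and consequently $w_{2i}^2(\xi)=0$ for every $i$.

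To finish, observe that every nonzero Pontrjagin class must lie in $H^{4i}(X_{m,n};\bb Z)$ with $4i\ge n+3$, since $n+1\equiv 2\pmod 4$ is not a multiple of $4$ and the groups in positive degrees below $n+1$ vanish. In these degrees $q^*$ identifies the reduction $\rho_2$ with the (injective) reduction on $\bb{RP}^m$, so $\rho_2$ is injective on each relevant $H^{4i}(X_{m,n};\bb Z)$. Then \eqref{Pontrjagin_Whitney} gives $\rho_2(p_i(\xi))=w_{2i}^2(\xi)=0$, and injectivity forces $p_i(\xi)=0$ for all $i$, so $X_{m,n}$ is $\cP$-trivial. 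The delicate point I expect to need the most care is the passage from the single generator $\eta$ to an arbitrary bundle, i.e. the assertion that the total Stiefel--Whitney class of every bundle is controlled by the one class $w_{2^g}(\eta)$, whose square is precisely the obstruction that first survives when $m$ reaches $2^{g+1}$.
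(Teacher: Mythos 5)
Your proposal is correct, and its engine is exactly the paper's: Adams' computation of $\widetilde{KO}(X_{m,n})$ (the generator mapping to $2^g\lambda$ under $q^*$) forces the Stiefel--Whitney classes of every bundle to be concentrated in degree $2^g$, so all squares $w_{2i}^2(\xi)$ land in $H^{2^{g+1}}(X_{m,n};\bb Z_2)=0$ when $m<2^{g+1}$, and then \eqref{Pontrjagin_Whitney} together with injectivity of $\rho_2$ on the relevant groups $H^{4i}(X_{m,n};\bb Z)$ kills every Pontrjagin class. The difference is organizational: the paper splits the range, handling $m<2^g$ by descending to the even stunted space $X_{m,n-1}$ as in Corollary \ref{X_m,n_5_mod_8} (which rests on Corollary \ref{X_m,n_0_mod_2}, i.e.\ on the $\cc$-triviality input from \cite{NT2014}), and only runs the Stiefel--Whitney argument on the band $2^g\le m<2^{g+1}$; you correctly diagnose why the descent cannot reach that band ($\varphi(n-1,0)=g-1$) and instead run the Stiefel--Whitney argument uniformly over all of $m<2^{g+1}$, which works and renders the case split and the appeal to the earlier corollaries unnecessary. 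Your write-up is also more explicit where the paper is terse: you derive $w(\xi)=1+(j\bmod 2)\,w_{2^g}(\eta)$ from cyclicity of $\widetilde{KO}(X_{m,n})$ rather than merely asserting vanishing below degree $2^g$, and you justify the paper's ``it is easy to see'' injectivity of $\rho_2$ by transporting it along the isomorphism $q^*$ to $\bb{RP}^m$. The one step you leave as ``one checks'' does hold and deserves a line: for $n=8a+1$ one has $g=\varphi(n,0)=4a+1$, and $2^{4a+1}\ge 8a+2=n+1$ for all $a\ge 0$ (with equality only at $n=1$), which is what places $w_{2^g}(\eta)$ in the range of degrees where $q^*$ is an isomorphism.
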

\begin{proof}
    For $n\equiv 1\pmod 8$, we have $g^\prime=\varphi(n-1, 0)= \varphi(n, 0)-1=g-1.$ Therefore, if $m<2^{g^\prime+1}=2^g$, the same argument as Corollary \ref{X_m,n_5_mod_8}  shows that $X_{m, n}$ is $\cP$-trivial.

    Let $2^g\le m<2^{g+1}$. It is easy to see that the reduction modulo two homomorphism, $\rho_2\colon H^{2j}(X_{m, n}; \bb Z)\to H^{2j}(X_{m, n}; \bb Z_2)$ is injective whenever $H^{2j}(X_{m, n}; \bb Z)\neq 0$. If $X_{m, n}$ is $\mathcal W$-trivial, then by \eqref{Pontrjagin_Whitney}, $X_{m, n}$ is $\cP$-trivial. If, however, $X_{m, n}$ is not $\mathcal W$-trivial, since $\alpha$ is a generator of $\widetilde{KO}(X_{m, n})$, from \eqref{total_Whitney_class_of_lambda}, the Stiefel-Whitney classes of dimension less than $2^g$ vanishes for any vector bundle over $X_{m, n}$. Therefore, as $m<2^{g+1}$, the square of any nontrivial Stiefel-Whitney class is zero, and we can again apply \eqref{Pontrjagin_Whitney} to conclude that $X_{m, n}$ is $\cP$-trivial.
    
\end{proof}

To conclude, let $0< n<m$ and $k> 0$. We have from \cite[Theorem 1.2]{NT2014} that $X_{m, n}^k$ is not $C$-trivial if both $k, m$ are odd or $k$ even $n$ odd. Therefore, as $\cc$-triviality implies $\cP$-triviality, we only look into these subfamily of spaces, that is, $X^k_{m, n}$ where both $k, m$ are odd or $k$ even $n$ odd, while discussing non-$\cP$-triviality of $X^k_{m, n}$.



\begin{lemma}\label{p-triviality_of_suspension}
$X^k_{m, n}$ is not $\mathcal P$-trivial if and only if one of the following hold.\\
(a) $k$ and $m$ both are odd and $(k+m)\equiv 0\pmod 4$.\\
(b) $k$ even, $n$ odd and $k+n+1\equiv 0, 4\pmod 4.$
\end{lemma}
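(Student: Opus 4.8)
The plan is to split into the two families singled out just above (both $k,m$ odd; or $k$ even and $n$ odd), since outside them $X^k_{m,n}$ is $\cc$-trivial and hence $\cP$-trivial. The backbone of both directions is the following observation about suspensions. As $k\ge 1$, $X^k_{m,n}$ is a suspension, so $\mathrm{cup}(X^k_{m,n})=1$ and every square $w_{2j}^2$ vanishes; by \eqref{Pontrjagin_Whitney} this forces $\rho_2(p_j(\xi))=0$ for every real bundle $\xi$ and every $j$. Thus each Pontrjagin class lies in $\ker\rho_2=2H^{4j}(X^k_{m,n};\bb Z)$. From \eqref{suspension_stunted_cohomology} every group $H^{4j}(X^k_{m,n};\bb Z)$ is $0$, $\bb Z$, or $\bb Z_2$; when it is $\bb Z_2$ we have $2H^{4j}=0$, forcing $p_j=0$ there. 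Consequently a nonzero Pontrjagin class can occur only in a dimension $4j$ in which $H^{4j}(X^k_{m,n};\bb Z)\cong\bb Z$.

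Next I would locate the free summands. By \eqref{suspension_stunted_cohomology} the only positive-dimensional free summands lie in degree $n+1+k$ (present exactly when $n$ is odd) and degree $m+k$ (present exactly when $m$ is odd). In the family with $k,m$ odd, $m+k$ is even while $n+1+k$ is odd, so the only candidate dimension is $m+k$, divisible by four iff $m+k\equiv0\pmod4$. In the family with $k$ even and $n$ odd, $n+1+k$ is even while $m+k$ is odd, so the only candidate dimension is $n+1+k$, divisible by four iff $n+1+k\equiv0\pmod4$. In particular, if the congruence in (a), respectively (b), fails, then no free cohomology sits in a dimension divisible by four and $X^k_{m,n}$ is $\cP$-trivial; together with the $\cc$-trivial cases this proves that non-$\cP$-triviality forces (a) or (b).

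For the converse in case (a) I would reproduce the argument of Lemma \ref{triviality_of_suspension_of_RPm}. The quotient $\bb{RP}^m/\bb{RP}^n\to\bb{RP}^m/\bb{RP}^{m-1}=S^m$ is an isomorphism on $H^m$, so its $k$-fold suspension $X^k_{m,n}\to S^{m+k}$ is an isomorphism on $H^{m+k}$; since $m+k\equiv0\pmod4$, pulling back a bundle on $S^{m+k}$ with nonzero top Pontrjagin class (Lemma \ref{spheres}) yields, by naturality, a bundle on $X^k_{m,n}$ with $p_{(m+k)/4}\ne0$.

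The case (b) converse is the main obstacle, because the relevant free class now sits in the bottom cell, a subcomplex rather than a quotient, so the degree-one map is not available. I would first convert the problem into one about Chern classes: in a suspension all cup products vanish, so for a complex bundle $\eta$ the formula \eqref{chern_pontrjagin} collapses to $p_j(\eta_{\bb R})=\mp2c_{2j}(\eta)$; as $H^{n+1+k}\cong\bb Z$ is free with $4j=n+1+k$, it is enough to exhibit one complex bundle $\eta$ over $X^k_{m,n}$ with $c_{(n+1+k)/2}(\eta)\ne0$ and then take $\eta_{\bb R}$. To produce such an $\eta$ I would show that the restriction to the bottom cell $\widetilde K(X^k_{m,n})\to\widetilde K(S^{n+1+k})\cong\bb Z$ has nonzero image, so that some bundle restricts to a nonzero multiple of the generator, whose top Chern class is nonzero and pulls back to a nonzero class in $H^{n+1+k}(X^k_{m,n};\bb Z)$ by naturality. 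Since $n$ is odd, \eqref{RPm/RPm-2 identification} gives $X_{n+2,n}\simeq S^{n+1}\vee S^{n+2}$, so the bottom two cells split as a wedge of spheres; combining this with Adams's computation of $\widetilde K(X_{m,n})$ \cite[Theorems 7.3 and 7.4]{Adams1962} and the cofibration $X^k_{n+2,n}\to X^k_{m,n}\to X^k_{m,n+2}$, exactly in the spirit of Lemma \ref{surjectivity_n_cong_3_mod_4}, I expect to verify that the generator of $\widetilde K(S^{n+1+k})$ is hit up to a nonzero multiple. The verification of this nonvanishing is the crux of the whole lemma; once it is established, $c_{(n+1+k)/2}(\eta)\ne0$ gives $p_{(n+1+k)/4}(\eta_{\bb R})\ne0$ and $X^k_{m,n}$ fails to be $\cP$-trivial.
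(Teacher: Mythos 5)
Your reduction to the two families (via the $\cc$-triviality statement quoted from \cite{NT2014}) and your proof of the $\cP$-triviality direction are correct, and the latter is genuinely cleaner than the paper's. The paper disposes of the trivial cases piecemeal: for $k,m$ odd with $k+m\equiv 2\pmod 4$ it notes there is no cohomology in degrees divisible by four; for $k$ even, $n$ odd with $k+n+1\equiv 2\pmod 8$ it verifies injectivity of $\rho_2$ and invokes Corollary \ref{rho2_injective}; and for $k+n+1\equiv 6\pmod 8$ it runs a separate $\widetilde{KO}$-exact-sequence argument. Your single observation --- over a suspension every $w_{2j}^2$ vanishes, so by \eqref{Pontrjagin_Whitney} each $p_j(\xi)$ lies in $\ker\rho_2=2H^{4j}(X^k_{m,n};\bb Z)$ (Bockstein exactness), which kills $p_j$ whenever $H^{4j}$ is $0$ or $\bb Z_2$, leaving only the free summands of \eqref{suspension_stunted_cohomology} in degrees $n+1+k$ ($n$ odd) and $m+k$ ($m$ odd) as possible carriers --- handles all three trivial cases uniformly. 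Your converse in case (a) (degree-one map onto $S^{m+k}$ plus Lemma \ref{spheres}) is the same as the paper's.

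The gap is in the converse of case (b), exactly where you flagged it. The paper does not construct a bundle with $c_{(k+n+1)/2}\neq 0$ at all: for $m$ even it cites \cite[Theorem 1.2(5)]{NT2014} for its existence, and for $m$ odd it pulls back along $X^k_{m,n}\to X^k_{m+1,n}$, which is an isomorphism on $H^{n+k+1}$. You propose instead to prove the existence yourself by showing that the bottom-cell restriction $\widetilde{K}(X^k_{m,n})\to\widetilde{K}(S^{n+1+k})$ has nonzero image, but you never establish this; ``I expect to verify'' is a plan, not a proof, and the entire content of the lemma in case (b) rests on that step. For the record, your route can be completed, and more easily than your sketch suggests: the cofibration $S^{n+1}\to X_{m,n}\to X_{m,n+1}$ suspends to give exactness of $\widetilde{K}(X^k_{m,n})\xrightarrow{\iota^*}\widetilde{K}(S^{n+1+k})\xrightarrow{\delta}\widetilde{K}^1(X^k_{m,n+1})$; when $m$ is even, \eqref{stunted_homology} shows the reduced integral homology of $X_{m,n+1}$ (hence of $X^k_{m,n+1}$) consists only of $\bb Z_2$'s, since $n+2$ is odd and $m$ is even, so $\widetilde{K}^1(X^k_{m,n+1})$ is finite and $\delta$, being a homomorphism from $\widetilde{K}(S^{n+1+k})\cong\bb Z$ to a finite group, has nonzero kernel. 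Any $\eta$ restricting to a nonzero class on the bottom sphere then has $c_{(n+1+k)/2}(\iota^*\eta)\neq 0$, hence $c_{(n+1+k)/2}(\eta)\neq 0$, and $p_{(n+1+k)/4}(\eta_{\bb R})=\mp 2c_{(n+1+k)/2}(\eta)\neq 0$ because $H^{n+1+k}(X^k_{m,n};\bb Z)\cong\bb Z$ is torsion-free; the case $m$ odd then reduces to $m+1$ by pullback, just as in the paper. None of this appears in your write-up, so as submitted the proof of the non-triviality in (b) is missing.

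Two small corrections to the sketch itself: naturality runs the other way (Chern classes restrict, $\iota^*c(\eta)=c(\iota^*\eta)$; it is the nonvanishing of the restriction that forces $c(\eta)\neq 0$, nothing ``pulls back'' to the larger space), and in the wedge $X_{n+2,n}\simeq S^{n+1}\vee S^{n+2}$ note that $\widetilde{K}(S^{n+2+k})=0$ since $n+2+k$ is odd, so that splitting by itself only identifies $\widetilde{K}(X^k_{n+2,n})\cong\bb Z$ and does not yet produce the surjectivity you need.
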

\begin{proof}
$(a)$ If $k$ and $m$ both are odd and $k+m\equiv 2\pmod 4$, since there are no non-trivial even dimensional cohomology groups, $X^k_{m, n}$ is $\mathcal P$-trivial. Therefore, we are left with the case $k, m$ odd and $k+m\equiv 0\pmod 4$. Since there is a degree one map from $X^k_{m, n}\to S^{k+m}$ inducing isomorphism on the top cohomology group, and, since $S^{k+m}$ is not $\mathcal P$-trivial for $k+m\equiv 0\pmod 4$, $X^k_{m, n}$ also not $\mathcal P$-trivial. This proves $(a)$.\\

$(b)$ Now, let $k$ be even and $n$ be odd. Furthermore, assume that $k+n+1\equiv 0\pmod 4$ and $m$ is even. There is a complex bundle $\xi$ over $X^k_{m, n}$ such that $c_{\frac{k+n+1}{2}}(\xi)\neq 0$ (see \cite[Theorem 1.2(5)]{NT2014}). Note, here $n$ is odd, and hence for $j=n+k-1, j-k=n+1$ is even. Therefore, from (\ref{suspension_stunted_cohomology}), we have, $H^{n+k+1}(X_{m, n}^k; \bb Z)\cong \bb Z$. From (\ref{chern_pontrjagin}), we have $p_{\frac{k+n+1}{4}}(\xi_{\mathbb R})=\pm 2c_{\frac{k+n+1}{2}}(\xi)\neq 0$ and hence $X^k_{m, n}$ is not $\mathcal P$-trivial.

If, however, $m$ is odd, consider the isomorphism $(\Sigma^k\iota^\prime)^*\colon H^{n+k+1}(X_{m+1, n}^k; \bb Z)\to H^{n+k+1}(X_{m, n}^k; \bb Z)$ induced by $\iota^\prime\colon X_{m, n}\to X_{m+1, n}$. Since $m+1$ is even and $k+n+1\equiv 0\pmod 4$, by the above case $X^k_{m+1, n}$ is not $\cP$-trivial. Let $\xi$ be a complex bundle $\xi$ over $X^k_{m, n}$ such that $c_{\frac{k+n+1}{2}}(\xi)\neq 0$. Then $p_{\frac{n+k+1}{4}}((\Sigma^k\iota^\prime)^*(\xi)_\bb R)\neq 0$, and therefore, $X^k_{m, n}$ is not $\cP$-trivial.\\

Let $k$ be even and $k+n+1\equiv 2\pmod 8$. By Corollary \ref{rho2_injective}, it is enough to show that $\rho_2\colon H^{4j}(X^k_{m, n}; \bb Z)\to H^{4j}(X^k_{m, n}; \bb Z_2)$ is injective whenever $H^{4j}(X^k_{m, n}; \bb Z)$ is non-trivial. Note that, as $n+k+1\equiv 2\pmod 4$, the  $4j$-dimensional cohomology groups vanish for dimension less than $n+k+2$. Therefore, $H^{4j}(X^k_{m, n}; \bb Z)\neq 0$ for $m\ge 4j\geq n+k+3$, and form (\ref{suspension_stunted_cohomology}), we have the following exact sequence.
\[
0\to H^{4j}(X^k_{m, n}; \bb Z_2)\xrightarrow{\beta} H^{4j}(X^k_{m, n}; \bb Z)\to H^{4j}(X^k_{m, n}; \bb Z)\xrightarrow{\rho_2}H^{4j}(X^k_{m, n}; \bb Z_2)\to\cdots
\]
Since all the groups are isomorphic to $\bb Z_2$, $\rho_2$ is an isomorphism. Hence $X_{m,n}$ is $\cP$-trivial in this case.\\

Finally, let $k$ is even and $k+n+1\equiv 6\pmod 8$. Although a similar set of arguments as in the previous case shows that $\rho_2$ is an isomorphism in the required dimensions, we give a different proof of non-$\cP$-triviality. Let us consider the long exact sequence
\[
\cdots\to \widetilde{KO}^{-k}(X_{m, n+1})\to \widetilde{KO}^{-k}(X_{m, n})\to \widetilde{KO}^{-k}(S^{n+1})\to\cdots,
\]
corresponding to the co-fiber sequence $X_{n+1, n}\to X_{m, n}\to X_{m, n+1}$. Since $\widetilde{KO}^{-k}(S^{n+1})=0$ and, as $n+1$ is even, $X_{m, n+1}$ is $\cP$-trivial, $X_{m, n}$ cannot but be $\cP$-trivial. This completes the proof.
\end{proof}

Theorem \ref{main1} now follows from Corollary \ref{triviality_of_RPn}, Lemma \ref{triviality_of_suspension_of_RPm}, \ref{X_m,n_n_cong_3_mod_4}, \ref{p-triviality_of_suspension} and Corollary \ref{X_m,n_0_mod_2}, \ref{X_m,n_5_mod_8}, \ref{X_m,n_1_mod_8}.
\section{$\cP$-triviality of suspension of the stunted complex projective space}\label{5th}


We prove Theorem \ref{main2} in this section. To begin with, we have the following crucial lemma.
\begin{lemma}\label{suspension_of_CP2}
    $Y^k_2$ is $\cP$-trivial if and only if $k$ is odd.
\end{lemma}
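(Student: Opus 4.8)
The plan is to first read off the integral cohomology of $Y_2^k=\Sigma^k\mathbb{CP}^2$ and then split on the parity of $k$, the even case splitting further according to $k\bmod 4$. Since $\mathbb{CP}^2$ has cohomology $\mathbb Z$ concentrated in degrees $0,2,4$, the reduced $k$-fold suspension gives $H^j(Y_2^k;\mathbb Z)\cong\mathbb Z$ exactly for $j=0,\,k+2,\,k+4$, and $0$ otherwise. If $k$ is odd, then $k+2$ and $k+4$ are odd, so $H^{4j}(Y_2^k;\mathbb Z)=0$ for every $j\ge 1$; as Pontrjagin classes live in degrees divisible by four, every $p_j$ with $j\ge 1$ vanishes automatically and $Y_2^k$ is $\cP$-trivial. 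This settles the ``if'' direction with essentially no work.

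For the converse assume $k$ is even. When $k\equiv 0\pmod 4$ I would simply invoke Proposition \ref{suspension_of_orientable_manifolds}: $\mathbb{CP}^2$ is a closed orientable $4$-manifold, so with $n=4$ and $k\equiv 0\pmod 4$ the suspension $\Sigma^k\mathbb{CP}^2$ is not $\cP$-trivial (the case $k=0$ being $\mathbb{CP}^2$ itself, whose nonzero signature already obstructs $\cP$-triviality). The genuinely new case is $k\equiv 2\pmod 4$, where $k+4\equiv 2\pmod 4$ is useless and the only positive-degree group in a dimension divisible by four is the middle one $H^{k+2}(Y_2^k;\mathbb Z)\cong\mathbb Z$. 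Since no degree-one map onto a sphere detects a middle-dimensional class, here I must build a bundle directly in $K$-theory.

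The key step is as follows. For $k$ even, Bott periodicity gives $\widetilde K^0(\Sigma^k\mathbb{CP}^2)\cong\widetilde K^0(\mathbb{CP}^2)$, and I transport $\beta=\eta-1$ (with $\eta$ the tautological line bundle and $\mathrm{ch}(\beta)=x+\tfrac12 x^2$, $x$ a generator of $H^2(\mathbb{CP}^2)$) to a class $\xi$ over $Y_2^k$. Because the Chern character is a natural transformation of rationalized cohomology theories, it commutes with the suspension isomorphism, so the component of $\mathrm{ch}(\xi)$ in $H^{k+2}$ is $\mathrm{ch}_{1+k/2}(\xi)=\sigma(x)=a$, a generator of $H^{k+2}(Y_2^k;\mathbb Z)$. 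Over the suspension $Y_2^k$ all cup products of positive-degree classes vanish, whence $\mathrm{ch}_m(\xi)=\tfrac{(-1)^{m-1}}{(m-1)!}c_m(\xi)$, and therefore $c_{1+k/2}(\xi)=\pm(k/2)!\,a\neq 0$. As $k\equiv 2\pmod 4$, the index $1+k/2$ is even, say $1+k/2=2j$ with $j=(k+2)/4$, so formula \eqref{chern_pontrjagin} together with the vanishing of products yields $p_{(k+2)/4}(\xi_{\mathbb R})=\pm 2\,c_{1+k/2}(\xi)$, which is nonzero because $H^{k+2}(Y_2^k;\mathbb Z)\cong\mathbb Z$ is torsion-free. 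Hence $Y_2^k$ is not $\cP$-trivial.

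I expect the main obstacle to be precisely this $k\equiv 2\pmod 4$ case: the nonvanishing Pontrjagin class sits in the middle dimension rather than the top, so the degree-one collapse argument available for $k\equiv 0\pmod 4$ does not apply. Instead I must produce an explicit $K$-theory class and control its Chern character through the suspension isomorphism, keeping careful track of integrality — in particular that the detecting component is the integral generator $a$ with coefficient $1$ rather than $\tfrac12$ — so that the resulting even-indexed Chern class, and hence the Pontrjagin class, is genuinely nonzero.
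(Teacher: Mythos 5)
Your proof is correct, and for the crucial case $k\equiv 2\pmod 4$ it takes a genuinely different route from the paper. The paper splits that case modulo $8$: for $k\equiv 2\pmod 8$ it uses the co-fiber sequence $S^2\to\mathbb{CP}^2\to S^4$ and the vanishing of $\widetilde{KO}(S^{k+3})$ (here $k+3\equiv 5\pmod 8$) to conclude that restriction $\widetilde{KO}(\Sigma^k\mathbb{CP}^2)\to\widetilde{KO}(S^{k+2})$ to the bottom cell is surjective, then applies Lemma \ref{comparison_lemma} together with Lemma \ref{spheres}; for $k\equiv 6\pmod 8$, where $\widetilde{KO}(S^{k+3})\cong\mathbb{Z}_2$ blocks that argument, it runs the same exact sequence in complex $K$-theory, lifts a bundle with nonzero top Chern class from $S^{k+2}$, and finishes with \eqref{chern_pontrjagin}. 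You instead treat all $k\equiv 2\pmod 4$ uniformly: Bott periodicity transports $\eta-1\in\widetilde{K}(\mathbb{CP}^2)$ to a class $\xi$ on $\Sigma^k\mathbb{CP}^2$, compatibility of the Chern character with the periodicity/suspension isomorphism (which holds since the Bott class has Chern character exactly the generator of $H^2(S^2;\mathbb{Z})$) identifies $\mathrm{ch}_{1+k/2}(\xi)$ with an integral generator of $H^{k+2}$, and vanishing of cup products on a suspension converts this into $c_{1+k/2}(\xi)=\pm(k/2)!\,a\neq 0$ and then, since $1+k/2$ is even, into $p_{(k+2)/4}(\xi_{\mathbb R})=\pm 2c_{1+k/2}(\xi)\neq 0$. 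Both arguments detect non-triviality in the middle dimension $k+2$; yours buys uniformity (no mod $8$ split, no $KO$-groups of spheres) and an explicit class with explicit divisibility $(k/2)!$, while the paper's stays inside the toolkit it uses throughout (co-fiber exact sequences plus the comparison lemma) and needs no Chern character machinery. In fact your computation shows, in hindsight, that the paper's complex-$K$-theory argument for $k\equiv 6\pmod 8$ works verbatim for all $k\equiv 2\pmod 4$, so its separate $KO$ case is dispensable. One small point worth making explicit: your $\xi$ is a virtual class, so you should note that Chern and Pontrjagin classes are stable invariants, whence an honest bundle representative of $\xi$ (which exists over a finite complex, up to adding a trivial bundle) carries the same nonzero $p_{(k+2)/4}$; this is also implicit in the paper's own use of $K$-theoretic surjectivity. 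The remaining cases ($k$ odd via parity of cohomology, $k\equiv 0\pmod 4$ via Proposition \ref{suspension_of_orientable_manifolds}) coincide with the paper's treatment.
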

\begin{proof}
Since there is no non-trivial even dimensional cohomology group of $\Sigma^k\bb {CP}_2$ for $k$ odd, $Y^k_2$ is certainly $\cP$-trivial for $k$ odd. We show that $Y^k_2$ cannot be $\cP$-trivial for $k$ even.

    Assume, first, that $k\equiv 0,4 \pmod 8$, $i.e.,$ $k\equiv 0\pmod 4$. Then $Y^k_2$ is not $\cP$-trivial by Proposition \ref{suspension_of_orientable_manifolds}.\\


If $k\equiv 2\pmod 8$, the co-fiber sequence $S^2 \xrightarrow{\iota} \mathbb{CP}^2 \xrightarrow{q} S^4$ gives rise to the following long exact sequence.
\[
 \cdots \rightarrow\widetilde{KO}^{-k}(S^4)\xrightarrow{q^*} \widetilde{KO}^{-k}(\mathbb{CP}^2) \xrightarrow{\iota^*} \widetilde{KO}^{-k}(S^2)\xrightarrow{\delta} \widetilde{KO}^{-k+1}(S^4)\xrightarrow{q^*} \widetilde{KO}^{-k+1}(\mathbb{CP}^2) \rightarrow \cdots
\]
Since $k+3\equiv 5 \pmod 8$, we have $\widetilde{K O}^{-k+1}\left(S^{4}\right)=\widetilde{KO}(S^{k+3})=0$, and the above exact sequence gives rise to the exact sequence 
\[
\cdots \rightarrow\widetilde{KO}^{-k}(S^4)\xrightarrow{q^*} \widetilde{KO}(\Sigma^k\mathbb{CP}^2) \xrightarrow{\iota^*} \widetilde{KO}(S^{k+2})\rightarrow 0.
\]
 Now, $S^{k+2}$ is not $\cP$-trivial by Lemma \ref{spheres}, as $k+2\equiv 0\pmod 4$. Since  the map $\iota^*$ is surjective, $Y^k_2$ cannot be $\cP$-trivial by Lemma \ref{comparison_lemma}.\\

To conclude, let $k \equiv 6 \pmod 8$. We have the following long exact sequence of complex K-theory.
\[
\cdots \rightarrow\widetilde{K}^{-k}(\mathbb{CP}^2)\xrightarrow{\iota^*} \widetilde{K}^{-k}(S^2) \xrightarrow{\delta} \widetilde{K}^{-k+1}(S^4)\rightarrow\cdots.
\] 
Since $(-k+3)\equiv 1\pmod 2, \widetilde{K}^{-k+1}(S^4) = \widetilde{K}^{-k+3}(S^4) =0$, the above exact sequence gives rise to the exact sequence 
\[
\cdots \rightarrow\widetilde{K}(\Sigma^k\mathbb{CP}^2)\xrightarrow{\iota^*} \widetilde{K}(S^{k+2}) \rightarrow 0.
\]
Now $k+2\equiv 0\pmod 8$, and let $k+2=4t$. By the discussion before Lemma \ref{spheres}, there exists a complex vector bundle $\xi$ over $S^{k+2}$ such that $c_{2t}(\xi)\neq 0$. Since $\iota^*$ is surjective, there exist a bundle $\eta$ over $Y^k_2$ such that $\iota^*(\eta)=\xi$. Since $\iota^*(c_{2t}(\eta))=c_{2t}(\xi)\neq 0$, hence, $0\neq c_{2t}(\eta)=p_t(\eta_\bb R)$. Hence, $Y^k_2$ is not $\cP$-trivial again. This completes the proof.
\end{proof}



We are now ready to prove Theorem \ref{main2}.
\begin{lemma}\label{main_lemma_main2}
Let $k\geq 0$ be even. The stunted complex projective space $Y^k_{m, m-1}$ is not $\cP$-trivial if and only if $(2m+k)\equiv 0\pmod 4$. And, $Y^k_{m, n}$ is never $\cP$-trivial for $n\leq m-2$. 
\end{lemma}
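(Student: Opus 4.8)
The plan is to prove the two assertions separately, reducing each to a space whose $\cP$-triviality is already recorded, namely a sphere (Lemma \ref{spheres}) or an even suspension of $\mathbb{CP}^2$ (Lemma \ref{suspension_of_CP2}). For the first assertion I would start by identifying the space: since $\mathbb{CP}^{m-1}$ is the $(2m-2)$-skeleton of $\mathbb{CP}^m$, collapsing it leaves a single top cell, so $Y_{m, m-1} = \mathbb{CP}^m/\mathbb{CP}^{m-1} \cong S^{2m}$ and hence $Y^k_{m, m-1} = \Sigma^k S^{2m} \cong S^{2m+k}$. The claim is then immediate from Lemma \ref{spheres}: the sphere $S^{2m+k}$ fails to be $\cP$-trivial exactly when $2m+k \equiv 0 \pmod 4$.

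For the second assertion, fix $k$ even and $n \leq m-2$ (so $m \geq 2$), and the idea is to push everything onto the top two cells. Since $\dim Y_{m-2, n} = 2m-4$, the quotient map $q \colon Y_{m, n} \to Y_{m, n}/Y_{m-2, n} = Y_{m, m-2}$ induces an isomorphism on $H^j$ for $j \geq 2m-2$; after suspending, $\Sigma^k q \colon Y^k_{m, n} \to Y^k_{m, m-2}$ is an isomorphism on $H^{2m-2+k}$ and on $H^{2m+k}$, which are the only non-zero positive-dimensional cohomology groups of $Y^k_{m, m-2}$. Thus it suffices to show that $Y^k_{m, m-2}$ is not $\cP$-trivial: any bundle $\xi$ over $Y^k_{m, m-2}$ with a non-trivial Pontrjagin class must carry it in dimension $2m-2+k$ or $2m+k$, and by naturality of Pontrjagin classes $(\Sigma^k q)^*\xi$ then has a non-trivial Pontrjagin class over $Y^k_{m, n}$.

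To see that $Y^k_{m, m-2}$ is not $\cP$-trivial I would split on the parity of $m$, exactly as in the real case (\ref{RPm/RPm-2 identification}). The attaching map of the top cell of $Y_{m, m-2}$ is detected by $Sq^2$ on $H^{2m-2}(\,\cdot\,; \bb Z_2)$, and since $Sq^2(x^{m-1}) = (m-1)x^m$ in $\mathbb{CP}^m$, it is non-zero precisely when $m$ is even. Hence $Y_{m, m-2} \simeq \Sigma^{2m-4}\mathbb{CP}^2$ for $m$ even and $Y_{m, m-2} \simeq S^{2m-2}\vee S^{2m}$ for $m$ odd. In the even case $Y^k_{m, m-2} \simeq Y^{2m-4+k}_2$ with $2m-4+k$ even, which is not $\cP$-trivial by Lemma \ref{suspension_of_CP2}. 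In the odd case $Y^k_{m, m-2} \simeq S^{2m-2+k}\vee S^{2m+k}$; exactly one of the two dimensions is divisible by $4$, so one wedge summand is not $\cP$-trivial by Lemma \ref{spheres}, and the bundle supported on that summand (pulled back along the retraction onto it, trivial on the other) exhibits the non-$\cP$-triviality.

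The only non-formal inputs, and hence the part I expect to be delicate, are the homotopy identification of $Y_{m, m-2}$ by the $Sq^2$ computation and the appeal to Lemma \ref{suspension_of_CP2}; everything else is naturality together with the cohomology computations of Section \ref{2nd}. I would also verify the boundary case $m = 2$, where $n \leq m-2$ forces $n = 0$, the map $q$ is essentially the identity, and the statement reduces directly to Lemma \ref{suspension_of_CP2}. Finally I would confirm, as is automatic here, that the non-trivial Pontrjagin class of $Y^k_{m, m-2}$ lives in the range $j \geq 2m-2+k$ on which $\Sigma^k q$ is an isomorphism, so that the pullback argument of the second paragraph applies without loss.
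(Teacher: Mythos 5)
Your proposal is correct, and its reduction step runs in the opposite (and in fact cleaner) direction from the paper's. The shared skeleton is the same: $Y^k_{m,m-1}=S^{2m+k}$ is handled by Lemma \ref{spheres}, and the two-cell space $Y_{m,m-2}$ is identified as $\Sigma^{2m-4}\bb{CP}^2$ for $m$ even and $S^{2m}\vee S^{2m-2}$ for $m$ odd (the paper simply quotes this identification; your $Sq^2$ computation is the standard proof of it), with Lemma \ref{suspension_of_CP2} and Lemma \ref{spheres} supplying non-$\cP$-triviality there. The difference lies in how the general case $n\le m-3$ is reduced to the two-cell case: the paper uses the cofiber sequence $Y_{n+2,n}\to Y_{m,n}\to Y_{m,n+2}$ and reduces to the \emph{bottom} two cells $Y_{n+2,n}$, citing the isomorphisms $H^j(Y_{m,n};\bb Z)\to H^j(Y_{n+2,n};\bb Z)$ in degrees $j=2n+2,\,2n+4$, whereas you collapse onto the \emph{top} two cells via the quotient $q\colon Y^k_{m,n}\to Y^k_{m,m-2}$, which is an isomorphism on $H^{2m-2+k}$ and $H^{2m+k}$. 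Your direction buys rigor at no cost: a witness bundle on $Y^k_{m,m-2}$ literally pulls back along $q$, and injectivity of $q^*$ in the only two degrees where $Y^k_{m,m-2}$ has cohomology preserves the nonzero Pontrjagin class. In the paper's direction the witness bundle lives on the subcomplex, and bundles pull back along the inclusion the wrong way; to make that reduction airtight one must extend the witness class (stably) over the higher cells, e.g.\ by observing that the restriction $\widetilde{K}(Y_{m,n})\to\widetilde{K}(Y_{n+2,n})$ is surjective and then invoking Lemma \ref{comparison_lemma} --- a point the paper leaves implicit in the phrase ``pullback of a relevant bundle.'' Your version also treats all even $k\ge 0$ uniformly, where the paper splits $k=0$ from $k\ge 2$.
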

\begin{proof}
Note that for $n=m-1, Y^k_{m, n}=S^{2m+k}$, and the statement follows from Lemma \ref{spheres}.\\

Let us assume $k=0$. Further, let $n = m-2$. Recall that 
\[
Y_{m, m-2}=\begin{cases}
S^{2m}\vee S^{2m-2} \textit{, for $m$ odd;}\\
\Sigma^{2m-4} \bb{CP}^2 \textit{, for $m$ even.}
\end{cases}
\]
Now, by the Lemma \ref{suspension_of_CP2}, as $\Sigma^{2m-4} \bb{CP}^2$ is never $\cP$-trivial for $m$ even, $Y_{m, n}$ is not $\cP$-trivial for $m$ even. For $m$ odd, $(2m-2)\equiv 0\pmod 4$, and $S^{2m-2}$ is not $\cP$-trivial. Therefore, $Y_{m, m-2}$ is not $\cP$ trivial for $m$ odd. In fact, the pullback of a bundle with non-trivial total Pontrjagin class over $S^{2m-2}$ by the projection $Y_{m, m-2}\to S^{2m-2}$ has non-trivial total Pontrjagin class.\\

For $n\leq m-3$, the sequence of inclusions $\bb{CP}^n\hookrightarrow \bb{CP}^{n+2}\hookrightarrow \bb{CP}^m$ induces the co-fiber sequence
 \[
 Y_{n+2, n}\to Y_{m, n}\to Y_{m, n+2}.
 \]
Since the induced map $H^{j}(Y_{m, n}; \bb Z)\to H^{j}(Y_{n+2,n}; \bb Z)$ is an isomorphism on dimensions $2n+4$ and $2n+2$, the non $\cP$-triviality of $Y_{m, n}$ follows from that of $Y_{n+2, n}$ by considering the pullback of a relevant bundle as in the previous case. \\

Now, let us assume $k\geq 2$. Then, $\Sigma^k Y_{m,m-2}$ is either a wedge of two spheres, at least one of which is of dimension multiple of four, or an even suspension of $\bb{CP}^2$ and, therefore, never $\cP$-trivial by Lemma \ref{spheres} and \ref{suspension_of_CP2}.\\

For $n\leq m-3$, the cellular inclusion $\Sigma^k Y_{n+2, n}\to \Sigma^k Y_{m, n}$ induces, as in the case when $k=0$, an isomorphism $H^{j}(\Sigma^k Y_{m,n}; \bb Z)\to H^{j}(\Sigma^k Y_{n+2,n}; \bb Z)$ in dimensions $j= k+2n+4, k+2n+2$, the non $\cP$-triviality of $\Sigma^k Y_{m, n}$ follows form that of $\Sigma^k Y_{n+2, n}$.
\end{proof}

Since $Y_{m,n}$ has nontrivial cohomology groups in even dimensions only, if $k$ is odd, $Y^k_{m,n}$ has nontrivial cohomology groups in odd dimensions only, and, hence $Y^k_{m,n}$ is $\cP$-trivial in this case. This, in addition to Lemma \ref{suspension_of_CP2} and \ref{main_lemma_main2}, completes the proof of Theorem \ref{main2}.

{\bf Acknowledgments:} The authors would like to thank Aniruddha Naolekar for introducing the problem and the productive discussion they had with him.

{\bf Funding:} The first named author was partially funded by the \address{\href{http://dx.doi.org/10.13039/501100005276}{National Board for Higher Mathematics} during the project. The second named author was funded by the \address{\href{http://dx.doi.org/10.13039/501100013357}{Indian Statistical Institute}, Bangalore Centre. 

\bibliography{main}
\bibliographystyle{abbrv}
\end{document}